\documentclass[a4paper,leqno]{amsart}

\usepackage{amssymb,amsthm,amsmath,amsrefs}

\usepackage{ifthen}


\newcounter{mylisti} \newcounter{mylistii}
\newcounter{nest}
\newcommand{\defaultlabel}{}

\newenvironment{mylist}[1]{%
  \addtocounter{nest}{1}
  \ifthenelse{\value{nest}=1}{%
    \renewcommand{\defaultlabel}{(\roman{mylisti})\hfill}}{%
    \renewcommand{\defaultlabel}{(\alph{mylistii})\hfill}}
  \begin{list}{\defaultlabel}{%
      \ifthenelse{\value{nest}=1}{\usecounter{mylisti}}{%
        \usecounter{mylistii}}
      
      \addtolength{\itemsep}{0.5ex}
      \settowidth{\labelwidth}{#1}
      \setlength{\leftmargin}{\labelwidth}
      \addtolength{\leftmargin}{\labelsep}}}{\addtocounter{nest}{-1}
\end{list}}




\newcommand{\bn}{\ensuremath{\mathbb N}}

\newcommand{\br}{\ensuremath{\mathbb R}}


\newcommand{\cA}{\ensuremath{\mathcal A}}

\newcommand{\cF}{\ensuremath{\mathcal F}}
\newcommand{\cG}{\ensuremath{\mathcal G}}

\newcommand{\cS}{\ensuremath{\mathcal S}}

\newcommand{\cU}{\ensuremath{\mathcal U}}




\newcommand{\At}{\ensuremath{\tilde{A}}}
\newcommand{\Bt}{\ensuremath{\tilde{B}}}







\newcommand{\abs}[1]{\lvert #1\rvert}

















\newcommand{\bi}{\ensuremath{\boldsymbol{1}}}

\newcommand{\intp}[1]{\ensuremath{\lfloor #1\rfloor}}

\newcommand{\ip}[2]{\ensuremath{\langle #1,#2\rangle}}
\newcommand{\bigip}[2]{\ensuremath{\big\langle #1,#2\big\rangle}}
\newcommand{\Bigip}[2]{\ensuremath{\Big\langle #1,#2\Big\rangle}}




\newcommand{\norm}[1]{\lVert #1\rVert}
\newcommand{\bignorm}[1]{\big\lVert #1\big\rVert}
\newcommand{\Bignorm}[1]{\Big\lVert #1\Big\rVert}

\newcommand{\tnorm}[1]{\lvert\mspace{-1mu}\lvert\mspace{-1mu}\lvert
  #1\rvert\mspace{-1mu}\rvert\mspace{-1mu}\rvert}









\newcommand{\cspn}{\ensuremath{\overline{\mathrm{span}}}}

\newcommand{\supp}{\operatorname{supp}}







\newcommand{\co}{\mathrm{c}_0}
\newcommand{\coo}{\mathrm{c}_{00}}


\newcommand{\vare}{\varepsilon}
\newcommand{\varf}{\varphi}


\newcommand{\ds}{\displaystyle}

\newcommand{\ts}{\textstyle}
\newcommand{\phtm}[1]{\text{\makebox[0pt]{\phantom{$#1$}}}}

\newcommand{\ie}{\textit{i.e.,}\ }

\newcommand{\cf}{\textit{cf.}\ }


\newtheorem{thm}{Theorem}
\newtheorem{mainthm}{Theorem}

\newtheorem{mainproblem}[mainthm]{Problem}

\newtheorem{lem}[thm]{Lemma}
\newtheorem{prop}[thm]{Proposition}
\newtheorem{cor}[thm]{Corollary}
\newtheorem{problem}[thm]{Problem}

\theoremstyle{definition}

\theoremstyle{remark}

\newtheorem*{rem}{Remark}

\begin{document}

\allowdisplaybreaks

\title{Renorming spaces with greedy bases}

\author{S.~J.~Dilworth}
\address{Department of Mathematics, University of South Carolina,
  Columbia, SC 29208, USA}
\email{dilworth@math.sc.edu}

\author{D.~Kutzarova}
\address{Institute of Mathematics, Bulgarian Academy of
Sciences, Sofia, Bulgaria.}
\curraddr{Department of Mathematics, University of Illinois at   
Urbana-Champaign, Urbana, IL 61801, USA.}
\email{denka@math.uiuc.edu}

\author{E.~Odell}

\author{Th.~Schlumprecht}
\address{Department of Mathematics, Texas A\&M University, College
  Station, TX 77843, USA and Faculty of Electrical Engineering, Czech
  Technical University in Prague,  Zikova 4, 166 27, Prague}
\email{thomas.schlumprecht@math.tamu.edu}

\author{A.~Zs\'ak}
\address{Peterhouse, Cambridge, CB2 1RD, UK}
\email{a.zsak@dpmms.cam.ac.uk}

\date{15 March 2014}

\thanks{Edward Odell (1947-2013). The author passed away during the
  production of this paper.}
\thanks{The first author's research was supported by NSF grant
  DMS1101490. The fourth author's research was supported by NSF grant
  DMS1160633. The first, second and fifth authors were supported by
  the Workshop in Analysis and Probability at Texas A\&M University in
  2013. The fifth author was supported by Texas A\&M University while
  he was Visiting Scholar there in 2014.} 
\keywords{Greedy bases, democratic bases, renorming}
\subjclass[2010]{41A65, 41A44, 41A50, 46B03}

\begin{abstract}
  We study the problem of improving the greedy constant or the
  democracy constant of a basis of a Banach space by renorming. We
  prove that every Banach space with a greedy basis can be renormed,
  for a given $\vare>0$, so that the basis becomes
  $(1+\vare)$-democratic, and hence $(2+\vare)$-greedy, with respect
  to the new norm. If in addition
  the basis is bidemocratic, then there is a renorming so that in the
  new norm the basis is $(1+\vare)$-greedy. We also prove that in the
  latter result the additional assumption of the basis being
  bidemocratic can be removed for a large class of bases. Applications
  include the Haar systems in $L_p[0,1]$, $1<p<\infty$, and in dyadic
  Hardy space $H_1$, as well as the unit vector basis of Tsirelson
  space.
\end{abstract}

\maketitle

\section{Introduction}

In approximation theory one is often faced with the following
problem. We start with a signal, \ie a vector $x$ in some
Banach space $X$. We then consider the (unique) expansion
$\sum_{i=1}^\infty x_i e_i$ of $x$ with respect to some (Schauder)
basis $(e_i)$ of $X$. For example, this may be a Fourier expansion of
$x$, or it may be a wavelet expansion in $L_p$. We then wish to
approximate $x$ by considering $m$-term approximations with respect to
the basis. The smallest error is given by
\[
\sigma_m(x) = \inf \bigg\{ \Bignorm{x-\sum_{i\in A}
  a_ie_i}:\,A\subset\bn,\ \abs{A}\leq m,\ (a_i)_{i\in A}\subset \br
\bigg\}\ .
\]
We are interested in algorithms that are easy to implement and that
produce the best $m$-term approximation, or at least get close to
it. A very natural process is the greedy algorithm which we now
describe. For each $x=\sum x_ie_i\in X$ we fix a permutation
$\rho=\rho_x$ of $\bn$ (not necessarily unique) such that
$\abs{x_{\rho(1)}}\geq \abs{x_{\rho(2)}}\geq \dots$. We then define
the \emph{$m^{\text{th}}$ greedy approximant to $x$ }by
\[
\cG_m(x)=\sum_{i=1}^m x_{\rho(i)} e_{\rho(i)}\ .
\]
For this to make sense we need $\inf \norm{e_i}>0$, otherwise $(x_i)$
may be unbounded. In fact, since we will be dealing with democratic
bases, all our bases will be \emph{seminormalized}, which means that
$0<\inf \norm{e_i}\leq \sup\norm{e_i} <\infty$. It follows that the
biorthogonal functionals $(e^*_i)$ are also seminormalized. Note that
a space with a seminormalized basis $(e_i)$ can be easily renormed to
make $(e_i)$ \emph{normalized}, \ie $\norm{e_i}=1$ for all $i\in\bn$.

We measure the efficiency of the greedy algorithm by comparing it to
the best $m$-term approximation. We say that $(e_i)$ is a \emph{greedy
  basis }for $X$ if there exists $C>0$ (\emph{$C$-greedy}) such that
\[
\norm{x-\cG_m(x)} \leq C\sigma_m(x)\qquad \text{for all }x\in X\text{
  and for all }m\in\bn\ .
\]
The smallest $C$ is the \emph{greedy constant }of the basis. Note that
being a greedy basis is a strong property. It implies in particular
the strictly weaker property that $\cG_m(x)$ converges to $x$ for all
$x\in X$. If this weaker property holds, then we say that the basis
$(e_i)$ is \emph{quasi-greedy}. This is still a non-trivial property:
a Schauder basis need not be quasi-greedy in general.

The simplest examples of greedy bases include the unit vector basis
of $\ell_p$ ($1\leq p<\infty$) or $\co$, or orthonormal bases of a
separable Hilbert space. An important and non-trivial example is the
Haar basis of $L_p[0,1]$ ($1<p<\infty$) which was shown to be greedy
by V.~N.~Temlyakov~\cite{temlyakov:98}. This result was later
established by P.~Wojtaszczyk~\cite{wojtaszczyk:00} using a different
method which extended to the Haar system in one-dimensional dyadic
Hardy space $H_p(\br),\ 0<p\leq 1$. We also mention two recent
results. S.~J.~Dilworth, D.~Freeman, E.~Odell and
Th.~Schlumprecht~\cite{dfos:11} proved that $\big(\oplus_{n=1}^\infty
\ell_p^n\big)_{\ell_q}$ has a greedy basis whenever $1\leq p\leq
\infty$ and $1<q<\infty$. Answering a question raised
in~\cite{dfos:11}, G.~Schechtman showed that none of the space
$\big(\bigoplus_{n=1}^\infty \ell_p \big)_{\ell_q}$, $1\leq p\neq
q<\infty$, $\big(\bigoplus_{n=1}^\infty \ell_p \big)_{\co}$, $1\leq
p<\infty$, and $\big(\bigoplus_{n=1}^\infty \co \big)_{\ell_q}$, $1\leq
q<\infty$, have greedy bases.

Greedy bases are closely related to unconditional
bases. We recall that a basis $(e_i)$ of a Banach space $X$ is said to
be \emph{unconditional }if there is a constant $K$
(\emph{$K$-unconditional}) such that
\[
\Bignorm{\sum a_i e_i} \leq K\cdot \Bignorm{\sum b_i e_i} \qquad
\text{whenever }\abs{a_i}\leq\abs{b_i} \text{ for all }i\in\bn\ .
\]
The best constant $K$ is the \emph{unconditional constant }of the
basis which we denote by~$K_U$. The property of being unconditional is
easily seen to be equivalent to that of being \emph{suppression
  unconditional }which means that for some constant $K$
(\emph{suppression $K$-unconditional}) the natural projection onto any
subsequence of the basis has norm at most $K$:
\[
\Bignorm{\sum _{i\in A} a_i e_i} \leq K \cdot
\Bignorm{\sum_{i=1}^\infty a_i e_i} \qquad\text{for all
}(a_i)\subset\br,\ A\subset\bn\ .
\]
The smallest $K$ is the \emph{suppression unconditional constant }of
the basis and is denoted by~$K_S$. It is easy to verify that $K_S\leq
K_U\leq 2K_S$. Note that it is trivial to renorm the space $X$ so that
in the new norm the basis is suppression
$1$-unconditional. Indeed, for $A\subset \bn$ the map $P_A\colon X\to
X$, defined by $P_A\Big( \sum_{i\in\bn} x_ie_i \Big)=\sum_{i\in A}
x_ie_i$, is bounded in norm by $K_S$. Hence
\[
\tnorm{x}=\sup \{ \norm{P_A(x)}:\, A\subset\bn \}
\]
is a $K_S$-equivalent norm on $X$ in which $(e_i)$ is suppression
$1$-unconditional. Similarly, using maps $M_\lambda\colon X\to
X$ given by $\sum x_ie_i\mapsto \sum\lambda_i x_i e_i$, where
$\lambda=(\lambda_i)\in B_{\ell_\infty}$, we can define a
$K_U$-equivalent norm on $X$ in which $(e_i)$ is $1$-unconditional.

In~\cite{kony-tem:99}, S.~V.~Konyagin and
V.~N.~Temlyakov introduced the notion of greedy and democratic bases
and proved the following characterization.
\begin{thm}[{\cite{kony-tem:99}*{Theorem~1}}]
  \label{thm:kt-greedy-char}
  A basis of a Banach space is greedy if and only if it is
  unconditional and democratic.
\end{thm}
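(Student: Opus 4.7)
The plan is to establish both directions of the equivalence. The forward implication---that $C$-greedy bases are unconditional and democratic---is obtained by testing the greedy inequality against well-chosen vectors. The reverse implication is a direct estimation of the greedy error in terms of the best $m$-term error using the unconditional and democratic constants.

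For the $(\Rightarrow)$ direction, suppose $(e_i)$ is $C$-greedy. To deduce suppression unconditionality, fix a finitely supported $x=\sum a_ie_i$ and $A\subset\supp(x)$, and form $z$ from $x$ by replacing $a_i$ with $a_i+M\sgn(a_i)$ for $i\in A$, with $M>\max_j|a_j|$. The $|A|$-th greedy approximant of $z$ then equals $P_A(z)$, so $z-\cG_{|A|}(z)=P_{A^c}(x)$; on the other hand, $\sum_{i\in A} M\sgn(a_i)e_i$ is a competing $|A|$-term approximation whose error is exactly $\|x\|$, hence $\sigma_{|A|}(z)\le\|x\|$. The greedy inequality then yields $\|P_{A^c}(x)\|\le C\|x\|$, i.e.\ suppression $C$-unconditionality. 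For democracy, given disjoint $A,B$ with $|A|=|B|$, apply the greedy inequality to $\sum_{i\in A}e_i+(1-\vare)\sum_{i\in B}e_i$ and let $\vare\downarrow 0$; the case of general $A,B$ follows by combining this with the unconditionality already established.

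For the $(\Leftarrow)$ direction, suppose $(e_i)$ is suppression $K$-unconditional and $\Delta$-democratic. Fix $x$ and $m$, let $A=\supp\cG_m(x)$, and set $\alpha=\min_{i\in A}|x_i|$. For any competing $m$-term approximation $y=\sum_{i\in B}b_ie_i$ with $|B|\le m$, split
\[
x-\cG_m(x)=P_{A^c\setminus B}(x)+P_{A^c\cap B}(x).
\]
The first summand equals $P_{A^c\setminus B}(x-y)$, so its norm is at most $K\|x-y\|$. For the second, since $|x_i|\le\alpha$ on $A^c\cap B$, unconditionality gives $\|P_{A^c\cap B}(x)\|\le K\alpha\bignorm{\sum_{i\in A^c\cap B}\sgn(x_i)e_i}$; unconditionality again strips the signs, democracy transfers the resulting sum to $\alpha\bignorm{\sum_{i\in A'}e_i}$ for any equal-cardinality $A'\subset A\setminus B$ (such $A'$ exists since $|A^c\cap B|\le|A\setminus B|$ whenever $|B|\le|A|$), and the bound $|x_i|\ge\alpha$ on $A'$ together with unconditionality converts this into $K\|P_{A'}(x)\|=K\|P_{A'}(x-y)\|\le K^2\|x-y\|$. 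Combining the two contributions and infimizing over $y$ produces $\|x-\cG_m(x)\|\le C'\sigma_m(x)$ with $C'$ depending only on $K$ and $\Delta$.

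The main bookkeeping obstacle is in $(\Leftarrow)$: democracy is stated only for equal-sized sets with $0/1$ coefficients, whereas the natural estimate produces signed unimodular sums on sets of possibly unequal cardinality. Handling this requires the combined use of suppression unconditionality (to pass to an equal-sized subset of $A\setminus B$) and unconditionality (to strip signs), executed in the right order; it is routine but requires care to keep track of the constants.
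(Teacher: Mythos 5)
The paper does not actually prove Theorem~\ref{thm:kt-greedy-char}; it cites it directly from Konyagin--Temlyakov. What the paper does prove is the refinement Theorem~\ref{thm:greedy-char} (the Albiac--Wojtaszczyk characterization of $C$-greedy bases via Property~(A)), and it is worth comparing your proof to that one.

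Your argument is correct and is essentially the original Konyagin--Temlyakov proof. Both directions are sound: the forward implication by perturbing a single vector so that the greedy algorithm is forced to pick a prescribed support, and the reverse implication by the decomposition $x-\cG_m(x)=P_{A^c\setminus B}(x)+P_{A^c\cap B}(x)$, controlling the first summand by suppression unconditionality alone and the second by a sign-stripping/democracy/threshold-comparison chain. The cardinality bookkeeping $|A^c\cap B|\le|A\setminus B|$ for $|B|\le|A|$ is handled correctly. One minor sloppiness: you announce suppression $K$-unconditionality but silently invoke full (sign-changing) unconditionality when stripping the signs $\sgn(x_i)$; this is harmless since $K_U\le 2K_S$, but it inflates the constant.

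The paper's Theorem~\ref{thm:greedy-char} proof uses a genuinely different organization of the reverse implication. Rather than splitting $x-\cG_m(x)$ into two pieces and bounding each by $\|x-y\|$, it starts from $\|x-b\|$ and produces a single descending chain
\[
\|x-b\|\ \ge\ \tfrac1K\Bignorm{\sum_{i\in A\setminus B}s\vare_ie_i+\sum_{i\notin A\cup B}x_ie_i}\ \ge\ \tfrac1{KC}\Bignorm{\sum_{i\in B\setminus A}s\vare_ie_i+\sum_{i\notin A\cup B}x_ie_i}\ \ge\ \tfrac1{K^2C}\|x-\cG_m(x)\|,
\]
where the middle step is a \emph{single} application of Property~(A), which packages the sign-stripping and the set-comparison together by moving coefficients at the threshold level $s$ from $A\setminus B$ to $B\setminus A$ while leaving the tail $\{i\notin A\cup B\}$ untouched. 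This buys a cleaner final constant $K^2C$ (versus the product of several unconditional and democratic constants your route accumulates), and it is exactly this sharper bookkeeping that the paper needs in Section~\ref{sec:bidemocratic} to make the renorming give $(1+\vare)$-greedy rather than merely $O(1)$-greedy. Your approach, by contrast, uses only the weaker and more classical democracy hypothesis and therefore proves the stated qualitative equivalence with simpler tools; it just cannot by itself yield the sharp constants the rest of the paper relies on.
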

A basis $(e_i)$ is said to be \emph{democratic }if there is a constant
$\Delta\geq 1$ (\emph{$\Delta$-democratic}) such that
\[
\Bignorm{\sum_{i\in A}e_i} \leq \Delta \Bignorm{\sum_{i\in
    B}e_i}\qquad \text{whenever }\abs{A}\leq\abs{B}\ .
\]
By carefully following the proof of~\cite{wojtaszczyk:03}*{Theorem~1},
one obtains the following estimates:
\[
K_S\leq C\ ,\quad \Delta\leq C\qquad\text{and}\qquad C\leq
K_S+K_S K_U^2\cdot\Delta\ .
\]
One can in fact get slightly better estimates by amalgamating some of
the steps in that proof:
\begin{equation}
  \label{eq:connections-between-constants}
  K_S\leq C\ ,\quad \Delta\leq C\qquad\text{and}\qquad C\leq
  K_S+K_U^2\cdot\Delta\ .
\end{equation}
That is, a $C$-greedy basis is suppression $C$-unconditional and
$C$-democratic, and conversely, an unconditional and
$\Delta$-democratic basis is $C$-greedy with $C\leq K_S+K_U^2\cdot
\Delta$. In particular a $1$-unconditional, $1$-democratic basis is
$2$-greedy. By~\cite{dosz:11}*{Theorem~3.1} the constant~$2$ is best
possible. Thus, improving the democracy constant by renorming will not
in general improve the greedy constant beyond~$2$.


In this paper we are concerned with the problem whether a Banach space
$X$ with a greedy basis $(e_i)$ can be renormed so that in the new
norm the greedy constant of the basis $(e_i)$ is improved ideally
to~$1$ or at least to $1+\vare$ where $\vare>0$ can be chosen
arbitrarily small. As a byproduct, we also obtain results on
renormings that improve the democracy constant. The maps $P_A$ and
$M_\lambda$ that were used above in renormings that improve the
unconditional constants are linear. By contrast, the functions $\cG_m$
that map vectors to their greedy approximants are not linear, and that
is what makes the problem of improving the greedy constant far from
trivial.

In the rest of this section we recall what is already known about this
problem and state our new results. Definitions will be given in later
sections when needed.

In~\cite{alb-woj:06} F.~Albiac and P.~Wojtaszczyk gave a characterization
of $1$-greedy bases in terms of a weak symmetry property of the
basis. They raised several open problems about symmetry properties of
$1$-greedy bases and about the possibility of improving greedy and
democratic constants by renorming. Most of the problems were answered
by four of the authors of this paper in~\cite{dosz:11}. In
Section~\ref{sec:bidemocratic} we recall the Albiac-Wojtaszczyk
characterization, and a theorem from~\cite{dosz:11} which shows that
a space with an unconditional, bidemocratic basis can be renormed to
make the basis $1$-unconditional and
$1$-bidemocratic. By~~\eqref{eq:connections-between-constants} above,
such a basis is $2$-greedy. Here we will obtain the following stronger
result.
\begin{mainthm}
  \label{mainthm:1+e-greedy-renorming-for-bidemocratic}
  Let $X$ be a Banach space with an unconditional, bidemocratic
  basis $(e_i)$. Then for all $\vare>0$ there is an equivalent norm on $X$
  with respect to which $(e_i)$ is $1$-unconditional, $1$-bidemocratic
  and $(1+\vare)$-greedy.
\end{mainthm}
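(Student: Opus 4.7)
The plan is to build on the $1$-bidemocratic renorming theorem of~\cite{dosz:11}---recalled in Section~\ref{sec:bidemocratic}---and further refine the norm so that the basis additionally becomes $(1+\vare)$-greedy while retaining $1$-unconditionality and $1$-bidemocracy.

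\textbf{Reduction and identification of the obstruction.} By~\cite{dosz:11} one may assume $\|\cdot\|$ is already $1$-unconditional and $1$-bidemocratic. Set $h(n)=\|\sum_{i\in A}e_i\|$ and $h^*(n)=\|\sum_{i\in A}e_i^*\|$ for $|A|=n$. Bidemocracy combined with the pairing $|A|=\langle\sum_Ae_i,\sum_Ae_i^*\rangle\leq\|\sum_Ae_i\|\cdot\|\sum_Ae_i^*\|$ gives the rigid identity $h(n)h^*(n)=n$ for every $n$. Inspecting the proof of~\eqref{eq:connections-between-constants}: for $x\in X$ with top-$m$ support $A$ and $B\subset\bn$ with $|B|\leq m$, writing $D=A^c\cap B^c$, $E=A\cap B^c$, $F=A^c\cap B$, one has $\|P_{A^c}(x)\|=\|P_D(x)+P_F(x)\|$ and $\|P_{B^c}(x)\|=\|P_D(x)+P_E(x)\|$. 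In our setting $K_S=K_U=\Delta=1$, and the bound $C\leq 2$ from~\eqref{eq:connections-between-constants} comes entirely from the triangle inequality $\|P_D(x)+P_F(x)\|\leq\|P_D(x)\|+\|P_F(x)\|$. To get $C\leq 1+\vare$ it would suffice to build an equivalent norm $|||\cdot|||$ in which the lattice super-democratic estimate
\[
|||P_D(x)+P_F(x)|||\leq (1+\vare)\,|||P_D(x)+P_E(x)|||
\]
holds whenever $D,E,F$ are pairwise disjoint, $|F|\leq|E|$, and $\min_{i\in E}|x_i|\geq\max_{j\in F}|x_j|$.

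\textbf{The renorming.} I would define $|||x|||=\sup_{T\in\cT}\|Tx\|$, where $\cT$ is a carefully chosen family of linear operators on $X$ acting as approximate ``shuffles'' between equi-cardinal disjoint subsets of $(e_i)$. The family should be large enough that for any $D,E,F$ as above, applying a suitable member of $\cT$ to $P_D(x)+P_F(x)$ produces a vector dominated (in the original norm) by $P_D(x)+P_E(x)$, yet small enough that every $T\in\cT$ is a $(1+\vare)$-contraction of $\|\cdot\|$. Closing $\cT$ under sign changes and permutations within equi-cardinal subsets preserves $1$-unconditionality of the supremum, and $1$-bidemocracy of the supremum should follow from the primal--dual rigidity $h(n)h^*(n)=n$. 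Given such a $\cT$, the lattice super-democratic inequality holds in $|||\cdot|||$ by construction, and rerunning the proof of~\eqref{eq:connections-between-constants} with this inequality in place of the triangle step yields greedy constant at most $1+\vare$ in the new norm.

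\textbf{Main obstacle.} The crux is the construction of $\cT$: the family must simultaneously support the lattice super-democratic inequality \emph{and} consist of $(1+\vare)$-contractions of $\|\cdot\|$. For general $1$-unconditional, $1$-democratic bases this cannot work---the analogue fails already for $\ell_p$ versus the Lorentz norm $\ell_{p,1}$, reflecting that aggressively symmetrizing renormings can inflate the norm by an unbounded factor. It is precisely the bidemocratic identity $h(n)h^*(n)=n$ that provides the primal--dual balance needed here: any attempted shuffle between equi-cardinal supports is simultaneously a primal and a dual shuffle, and the reciprocal scaling $h(n)\cdot h^*(n)=n$ forces both sides to be close to isometries at a quantitative level that can be driven below $1+\vare$. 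Turning this rigidity into an explicit $\cT$, and simultaneously verifying the three properties of the resulting supremum norm ($(1+\vare)$-equivalence to $\|\cdot\|$, $1$-unconditionality, and $1$-bidemocracy), is the main technical task.
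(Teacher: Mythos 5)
Your opening reduction matches the paper: renorm so $(e_i)$ is normalized, $1$-unconditional, $1$-bidemocratic (via Theorem~\ref{thm:bidemocratic-renorming}), and observe that $1$-bidemocracy forces $\varf(n)\varf^*(n)=n$. You also correctly identify the target: an inequality of Property~(A) type (cf.\ Theorem~\ref{thm:greedy-char}) with constant $1+\vare$, from which the $(1+\vare)$-greedy bound follows since $K_S=1$. That diagnosis of the obstruction is right.

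The proposal then breaks off, and the break is exactly where the proof has to happen. You write ``I would define $|||x|||=\sup_{T\in\cT}\|Tx\|$'' for an unconstructed family $\cT$ of ``approximate shuffle'' operators, and then acknowledge that ``turning this rigidity into an explicit $\cT$ \ldots is the main technical task.'' That is not a minor technicality to be filled in; it is the entire content of the theorem. Nothing in the proposal gives a candidate $\cT$, let alone verifies that each $T\in\cT$ is a $(1+\vare)$-contraction, that the supremum norm is $1$-unconditional and $1$-bidemocratic, and that the lattice super-democratic inequality holds. Moreover there is a structural mismatch: a supremum over linear operators must handle arbitrary equi-cardinal ``shuffles'' uniformly, and it is far from clear that bidemocracy alone bounds such operators by $1+\vare$. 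Indeed the paper never introduces any operator family.

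The paper's actual construction is different in kind: it takes a supremum over dual \emph{functionals} rather than operators, namely
$\tnorm{x} = \sup\bigl\{\,\bigip{\abs{x}}{x^*+\tfrac{1}{\varf^*(n)}\bi_A}:\,x^*\in\vare B_{X^*},\ \abs{A}=n\bigr\}$.
The two ingredients are precisely what you could not produce. First, the functionals $\tfrac{1}{\varf^*(n)}\bi_A$ are automatically in $B_{X^*}$ by definition of $\varf^*$, so equivalence of norms is free; there is no operator-norm estimate to fight. Second, Property~(A) is proved by taking a norming pair $(x^*,A)$ for $x+\bi_B$, constructing a companion set $\At$ (aligning $\At\cap\supp(x)=A\cap\supp(x)$ and matching $\abs{\Bt\cap\At}=\abs{B\cap A}$), and absorbing the cross-term $\ip{\bi_B}{x^*}$ into $\vare\tnorm{x+\bi_{\Bt}}$ using $1$-bidemocracy in the form $\vare\varf(\abs{\Bt}) \leq \vare\tnorm{x+\bi_{\Bt}}$. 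The small multiplier $\vare$ in front of $B_{X^*}$ is essential: it is what makes the problematic term small, not any contractivity of shuffle operators. Finally, the paper checks that the new norm is only $(1+\vare)$-bidemocratic and runs Theorem~\ref{thm:bidemocratic-renorming} once more after rescaling. If you want to salvage your sketch, the move is to abandon $\cT$ altogether and try to reinvent a dual-side construction along these lines; as it stands, the proposal has a genuine gap at its centre.
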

In particular, the above result applies to the Haar basis of
$L_p[0,1]$ for $1<p<\infty$. In~\cite{alb-woj:06} Albiac and
Wojtaszczyk raise the problem whether $L_p[0,1]$ can be renormed so
that the Haar basis becomes $1$-greedy in the new norm. This problem
is still open. The result above gets close to giving a positive
answer.

Section~\ref{sec:general} is concerned with the general case, \ie when
we do not assume bidemocracy. In~\cite{alb-woj:06} Albiac and
Wojtaszczyk asked whether the democracy constant can be improved
to~$1$. It was already shown in~\cite{dosz:11} that the answer in
general is `no': the Haar system of dyadic $H_1$, or an arbitrary
unconditional basis of Tsirelson's space $T$ cannot be made
$1$-democratic by renorming. Here we are able to prove the following
positive result.
\begin{mainthm}
  \label{mainthm:1+e-democratic}
  Let $(e_i)$ be an unconditional and democratic basis of a Banach
  space $X$. For any $\vare>0$ there is an equivalent norm on $X$ with
  respect to which $(e_i)$ is normalized, $1$-unconditional and
  $(1+\vare)$-democratic.
\end{mainthm}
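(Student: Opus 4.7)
The first step is to reduce to the case that $(e_i)$ is normalized and $1$-unconditional, which is immediate from the standard renormings described in the introduction. From now on write $\|\cdot\|$ for the ambient norm and set $\varphi(A) = \|\sum_{i \in A} e_i\|$ for finite $A \subseteq \bn$, $\varphi(n) = \sup_{|A|=n}\varphi(A)$, $\psi(n) = \inf_{|A|=n}\varphi(A)$, so that $\varphi(n) \leq \Delta\psi(n)$ where $\Delta$ is the democracy constant obtained after this reduction.

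Given $\vare > 0$, I would seek a new norm of the form
\[
\tnorm{x} = \sup_{A \in \cA} c_A \|P_A x\|,
\]
where $\cA$ is a carefully chosen family of finite subsets of $\bn$ and the weights $c_A>0$ are uniformly bounded above and below. Such a $\tnorm{\cdot}$ is automatically a norm equivalent to $\|\cdot\|$, and it is $1$-unconditional because each $P_A$ commutes with the multiplier maps $M_\mu$ (with $\|\mu\|_\infty \leq 1$) and $\|P_A M_\mu x\| \leq \|P_A x\|$ by $1$-unconditionality of $\|\cdot\|$. The entire work of the proof is to select $\cA$ and $(c_A)$ so that, for every $B$ with $|B|=n$, the value $\tnorm{\sum_{i\in B} e_i} = \sup_{A \in \cA} c_A\,\varphi(A \cap B)$ lies in a narrow band of relative width $(1+\vare)$ around a common target $g(n)$.

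The main obstacle is the following. The naive choice $c_A = g(|A|)/\varphi(A)$ with $\cA$ the family of all finite subsets yields $\tnorm{\sum_{i \in B} e_i} \geq g(n)$ (take $A=B$), but extensions $A \supseteq B$ with $\varphi(A)$ close to $\psi(|A|)$ produce terms as large as $\Delta\,g(n)$, leading only to $\Delta$-democracy and no improvement. The refinement therefore calls for one of two strategies: either (a) an iteratively defined sequence of norms $\tnorm{\cdot}_{k+1}$ built from $\tnorm{\cdot}_k$ in which the parameters are chosen so that the democracy constant decreases by a definite multiplicative factor at each stage while the composite equivalence constant remains uniformly controlled, or (b) a direct construction in which the family $\cA$ (or the weights $c_A$) is restricted so that contributions from extensions $A \supseteq B$ with $\varphi(A) \ll \varphi(|A|)$ are suppressed, ensuring the supremum is essentially realised on subsets of $B$, where it evaluates uniformly to $g(n)$.

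The technical heart of the proof is therefore to upgrade the abstract democracy inequality $\varphi(n)\leq\Delta\psi(n)$ to a uniform estimate showing that the supremum defining $\tnorm{\sum_{i\in B}e_i}$ is concentrated---across all $B$ of fixed cardinality---within a factor $(1+\vare)$ of a common target, by suitably fine-tuning $\cA$ and $(c_A)$ in terms of $\vare$ and $\Delta$. Once that estimate is established, the three desired properties (equivalence, $1$-unconditionality, $(1+\vare)$-democracy) follow directly from the construction.
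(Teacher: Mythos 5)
Your proposal correctly identifies the target shape of the argument (renorm using a carefully selected family of sets, tuned so that the supremum defining $\tnorm{\bi_B}$ collapses to a narrow band around a common value for each cardinality), and it correctly identifies the obstacle (the naive choice with weights $c_A = g(|A|)/\varphi(A)$ over all $A$ only reproduces $\Delta$-democracy). But the proposal stops precisely at the point where the real work starts: you announce that the ``technical heart'' is to upgrade the democracy inequality to a uniform concentration estimate, and then you do not do it. That upgrade is not routine; it is the content of a nontrivial lemma which you neither state nor prove.

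There are two concrete issues. First, the form of your candidate norm differs from what is needed. You take $\tnorm{x} = \sup_{A\in\cA} c_A\norm{P_A x}$, so that $\tnorm{\bi_B} = \sup_{A\in\cA} c_A\,\varphi(A\cap B)$; controlling $\varphi(A\cap B)$ simultaneously over all $A$ and $B$ is genuinely hard, since $\varphi$ is only quasi-concave and the intersections $A\cap B$ can be arbitrary. The paper instead uses \emph{flat functionals}: it adjoins to $\norm{\cdot}$ the suprema $\sup_{A\in\cA}\bigip{\abs{x}}{\frac{\varphi(\abs{A})}{\abs{A}}\bi_A}$, so that the contribution to $\tnorm{\bi_B}$ is $\frac{\varphi(\abs{A})}{\abs{A}}\abs{A\cap B}$, which is controlled simply by monotonicity of $\varphi(n)/n$. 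Second, and more importantly, the choice of $\cA$ cannot be guessed; it must satisfy two competing requirements: (i) the functionals $\frac{\varphi(\abs{A})}{\abs{A}}\bi_A$ for $A\in\cA$ must be uniformly bounded in $X^*$ (so the renorming is equivalent), and (ii) every finite $E$ must contain some $A\in\cA$ with $\abs{A}\geq q\abs{E}$ (so the new fundamental function is $\geq q\varphi(n)$ from below). Establishing the coexistence of (i) and (ii) is exactly Lemma~\ref{lem:flat-norming-fnl}, proved by an iterated greedy selection of norming functionals for $\bi_{F_k}$ and extraction of the indices where their cumulative mass is at least $\delta$, with a counting argument bounding the number of rounds. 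Your ``strategy (b)'' gestures at restricting $\cA$, but the correct restriction is not ``$\varphi(A)$ not too small relative to $\varphi(|A|)$''; it is boundedness of the dual norm $\bignorm{\frac{\varphi(\abs{A})}{\abs{A}}\bi_A}^*$, and the covering property (ii) is a theorem, not an observation. Without this lemma the proposal does not constitute a proof.
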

This answers a question raised by W.~B.~Johnson.
By equation~\eqref{eq:connections-between-constants}, it follows from
this theorem that if $(e_i)$ is a greedy basis of a Banach space $X$,
then for all $\vare>0$ there is an equivalent norm on $X$ with respect
to which $(e_i)$ is $1$-unconditional and $(2+\vare)$-greedy. The
following problem remains open in its full generality.
\begin{mainproblem}
  \label{problem:1+e-greedy-renorming-general}
  Let $X$ be a Banach space with a greedy basis $(e_i)$. Given
  $\vare>0$, does there exist an equivalent norm on $X$ with respect
  to which $(e_i)$ is $1$-unconditional and $(1+\vare)$-greedy?
\end{mainproblem}
In the last section we will give a positive answer for a large class
of bases. As an application we obtain, for any $\vare>0$, a renorming
of Hardy space $H_1$ and of Tsirelson's space $T$ such that the Haar
system, respectively, unit vector basis is $(1+\vare)$-greedy.

\section{Bidemocratic bases}
\label{sec:bidemocratic}

The aim of this section is to prove
Theorem~\ref{mainthm:1+e-greedy-renorming-for-bidemocratic}.
We first recall the Albiac-Wojtaszczyk characterization of $1$-greedy
bases~\cite{alb-woj:06}. In fact a trivial modification of their proof
gives a characterization of $C$-greedy bases for an arbitrary
$C\geq 1$. For the sake of completeness we shall state and prove their
result here in that more general form.

Let $(e_i)$ be a basis of a Banach space $X$ with biorthogonal
sequence $(e^*_i)$. For a finite set $A\subset\bn$ we denote by
$\bi_A$ the vector $\sum_{i\in A} e_i$ of $X$ or sometimes the vector
$\sum_{i\in A} e^*_i$ in $X^*$. It will be clear from the context
which one is meant. For example the notation $\norm{\bi_A}$ means the
norm of $\sum_{i\in A} e_i$ in $X$, whereas $\norm{\bi_A}^*$ indicates
the norm in the dual space of $\sum_{i\in A} e^*_i$. The
\emph{support} with
respect to the basis $(e_i)$ of a vector $x=\sum x_ie_i$ in $X$ 
is the set $\supp(x)=\{ i\in\bn:\,x_i\neq 0\}$.  The subspace of
vectors with finite support, \ie the linear span of $(e_i)$, can be
indentified in the obvious way with the space $\coo$ of real sequences
that are eventually zero. The basis $(e_i)$ then corresponds to the
unit vector basis of $\coo$. Given vectors $x=\sum x_ie_i$ and $y=\sum
y_ie_i$ in $\coo$, we say \emph{$y$ is a greedy rearrangement of $x$
}if there exist $w, u=(u_i), t=(t_j)\in\coo$ of pairwise disjoint
support such that $x=w+u$, $y=w+t$, $\abs{\supp(u)}=\abs{\supp(t)}$,
and $\norm{w}_{\ell_\infty}\leq \abs{u_i}=\abs{t_j}$ for all
$i\in\supp(u),\ j\in\supp(t)$. To put it informally, $y$ is obtained
from $x$ by moving (and possibly changing the sign of) some of the
coefficients of $x$ of maximum modulus to co-ordinates where $x$ is
zero. Given $C\geq 1$, we say that $(e_i)$ \emph{has Property~(A) with
  constant $C$ }if for all $x,y\in\coo$ we have $\norm{y}\leq
C\norm{x}$ whenever $y$ is a greedy rearrangement of $x$.
\begin{thm}[{\cf \cite{alb-woj:06}*{Theorem~3.4}}]
  \label{thm:greedy-char}
  Let $(e_i)$ be a basis of a Banach space $X$. If $(e_i)$ is
  $C$-greedy, then it is suppression $C$-unconditional and has
  Property~(A) with constant~$C$. Conversely, if $(e_i)$ is
  suppression $K$-unconditional and has Property~(A) with
  constant~$C$, then it is greedy with constant at most $K^2C$.

  In particular, a suppression $1$-unconditional basis of a Banach
  space is $C$-greedy if and only if it satisfies Property~(A) with
  constant~$C$.
\end{thm}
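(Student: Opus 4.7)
The proof splits into the forward direction (a $C$-greedy basis is both suppression $C$-unconditional and satisfies Property~(A) with constant $C$) and the converse. For the forward direction, both conclusions arise from the same device: build an auxiliary vector $z$ whose greedy approximant equals a targeted quantity, then exhibit a concrete $m$-term approximant of $z$ to control $\sigma_m(z)$. To show suppression $C$-unconditionality, given $x\in X$ and finite $A\subset\bn$, pick $\lambda>\norm{x}_\infty$ and set $z=P_{A^c}x+\lambda\sum_{i\in A}\epsilon_ie_i$ with $\epsilon_i=\sgn(x_i)$ (any sign when $x_i=0$). Every coefficient of $z$ on $A$ has magnitude $\lambda$, dominating those on $A^c$, so $\cG_{|A|}(z)=\lambda\sum_{i\in A}\epsilon_ie_i$ and $z-\cG_{|A|}(z)=P_{A^c}x$. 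Taking that same term as an $|A|$-term approximant bounds $\sigma_{|A|}(z)\leq\norm{x}$, yielding $\norm{P_{A^c}x}\leq C\norm{x}$. For Property~(A), given a greedy rearrangement $x=w+u$, $y=w+t$ with $\abs{u_i}=\abs{t_j}=\alpha\geq\norm{w}_\infty$, set $z=w+u+t$. The coefficients on $\supp(u)\cup\supp(t)$ all have magnitude $\alpha$, so some valid greedy ordering of $z$ lists $\supp(u)$ before $\supp(t)$, giving $\cG_m(z)=u$ and $z-\cG_m(z)=y$ for $m=\abs{\supp(u)}$; approximating $z$ by $t$ yields $\sigma_m(z)\leq\norm{z-t}=\norm{x}$ and hence $\norm{y}\leq C\norm{x}$.

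For the converse, fix $x$, $m$, a greedy set $A$ of size $m$, and an arbitrary $m$-term approximant $y=\sum_{i\in B}b_ie_i$. Write $D=A\setminus B$, $E=B\setminus A$ (so $\abs{D}=\abs{E}$) and $\alpha=\min_{i\in A}\abs{x_i}\geq\max_{i\notin A}\abs{x_i}$. Decompose
\[
x-\cG_m(x)\;=\;P_{A^c}x\;=\;P_{(A\cup B)^c}x+P_Ex.
\]
Since $\supp(y)\subset B$, $P_{(A\cup B)^c}x=P_{(A\cup B)^c}(x-y)$, which suppression $K$-unconditionality bounds by $K\norm{x-y}$. The main task is to bound $\norm{P_Ex}$. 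Because $\abs{x_i}\leq\alpha$ on $E$, unconditionality compares $\norm{P_Ex}$ to $\alpha\bignorm{\sum_{i\in E}\sgn(x_i)e_i}$; Property~(A) (with $w=0$ and uniform magnitude $\alpha$ on the disjoint equal-size sets $E$ and $D$) migrates this to $\alpha\bignorm{\sum_{j\in D}\epsilon_je_j}$ at cost $C$; a further unconditionality step, using $\abs{x_j}\geq\alpha$ on $D$, returns control in terms of $\norm{P_Dx}$. Since $D\cap B=\emptyset$, $P_Dx=P_D(x-y)$ is bounded by $K\norm{x-y}$. Collecting the estimates produces $\norm{x-\cG_m(x)}\leq K^2C\norm{x-y}$, and taking the infimum over $y$ gives the $K^2C$-greedy conclusion.

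The main obstacle is the bound on $\norm{P_Ex}$: Property~(A) only compares vectors whose ``moved'' coefficients are of uniform magnitude, so one must pass through signed characteristic sums on $E$ and $D$. This introduces unconditional factors on both sides; careful bookkeeping, keeping all estimates in the suppression form rather than the coarser $K_U\leq 2K$ relation, is what attains the clean $K^2C$ bound quoted in the theorem.
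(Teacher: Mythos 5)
Your forward direction is essentially the paper's argument. The only difference worth flagging is in the Property~(A) step: you take $z=w+u+t$ and invoke the freedom to break ties so that $\cG_m(z)=u$, whereas the paper takes $z=w+(1+\delta)u+t$, making the greedy set unique, and then lets $\delta\to 0$. Under the usual convention that a $C$-greedy basis must satisfy the inequality for \emph{every} admissible choice of greedy approximant, your version is fine; the paper's perturbation simply avoids having to invoke that convention.

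The converse, however, has a genuine gap, and it is precisely the bookkeeping you flag at the end. After decomposing
\[
x-\cG_m(x)=P_{(A\cup B)^c}x+P_Ex
\]
you bound the two pieces separately. The first gives $\norm{P_{(A\cup B)^c}x}\leq K\norm{x-y}$, and your chain for the second gives (at best) $\norm{P_Ex}\leq K^2C\norm{x-y}$. Adding these by the triangle inequality yields $\norm{x-\cG_m(x)}\leq (K+K^2C)\norm{x-y}$, not $K^2C\norm{x-y}$. This is strictly worse for every $K\geq 1$, and in the crucial case $K=1$ it gives $(1+C)$-greedy rather than $C$-greedy, which loses the ``in particular'' clause of the theorem entirely. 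There is no way to collect your two separate estimates into $K^2C$; the split itself is the problem.

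The paper avoids this by never splitting. It estimates $\norm{x-b}$ from below in a single chain, keeping the tail $P_{(A\cup B)^c}x$ attached throughout: starting from $x-b=\sum_{i\in A\setminus B}x_ie_i+\sum_{i\in B}(x_i-b_i)e_i+\sum_{i\notin A\cup B}x_ie_i$, one application of suppression $K$-unconditionality simultaneously kills the $B$-block and replaces $x_i$ by $s\vare_i$ on $A\setminus B$; then Property~(A) is applied with $w=\sum_{i\notin A\cup B}x_ie_i$ (\emph{not} $w=0$), $u=s\sum_{i\in A\setminus B}\vare_ie_i$, $t=s\sum_{i\in B\setminus A}\vare_ie_i$; and a final suppression step replaces $s\vare_i$ by $x_i$ on $B\setminus A$. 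The tail rides along in all three steps, so the three factors $K$, $C$, $K$ are the only losses, and the result is exactly $\norm{x-b}\geq (K^2C)^{-1}\norm{x-\cG_m(x)}$. Your intuition that the ``suppression form'' matters is correct, but the essential point is to use the tail as the $w$-part in Property~(A) rather than setting $w=0$ and paying for the tail with a separate triangle-inequality term.
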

\begin{proof}
  First assume that $(e_i)$ is $C$-greedy. We show that if $x=\sum
  x_ie_i\in X$ has finite support and $A\subset \bn$, then
  $\bignorm{\sum_{i\in A} x_ie_i}\leq C\norm{x}$. Let
  $B=\supp(x)\setminus A$ and $m=\abs{B}$. Choose a real number
  $\lambda>\norm{x}_{\ell_\infty}$, and set $z=\sum_{i\in A} x_ie_i +
  \lambda\bi_B$. Then $\cG_m(z)=\lambda\bi_B$ and $w=\sum_{i\in
    B}(\lambda-x_i)e_i$ is an $m$-term approximation to $z$. It
  follows that
  \[
  \Bignorm{\sum_{i\in A} x_ie_i} = \norm{z-\cG_m(z)} \leq C\norm{z-w}
  = C\norm{x}\ ,
  \]
  as required. We next show that $(e_i)$ has Property~(A) with
  constant~$C$. Let $y=w+t$ be a greedy rearrangement of $x=w+u$,
  where $w, u, t\in\coo$ are as in the definition above. Fix
  $\delta>0$ and set $z=w+(1+\delta)u+t$. Let
  $m=\abs{\supp(u)}=\abs{\supp(t)}$. Then $\cG_m(z)=(1+\delta)u$,
  whereas $t$ is another $m$-term approximation to $z$. It follows
  that
  \[
  \norm{y}=\norm{z-\cG_m(z)} \leq C\norm{z-t} =C
  \norm{x+\delta u}\ .
  \]
  Letting $\delta\to 0$ yields $\norm{y}\leq C\norm{x}$, as required.

  To prove the converse, fix $x=\sum x_ie_i\in \coo$ and $m\in\bn$. Let
  $\sum_{i\in A} x_ie_i$ be the $m^{\text{th}}$ greedy approximant to $x$,
  and let $b=\sum_{i\in B} b_ie_i$ be an arbitrary $m$-term
  approximation. Let $s=\min \{\abs{x_i}:\,i\in A\}$, and for each
  $i\in\bn$ let $\vare_i$ be the sign of $x_i$. Note that
  $\abs{x_i}\geq s\geq \abs{x_j}$ for all $i\in A$ and $j\notin A$.
  The following is a well known consequence of suppression
  $K$-unconditionality. If $0\leq y_i\leq z_i$ or $z_i\leq y_i\leq 0$
  for all $i\in\bn$, then $\bignorm{\sum y_ie_i}\leq K\bignorm{\sum
    z_ie_i}$. We use this in the first and third inequalities below,
  whereas the second inequality uses Property~(A).
  \begin{align*}
    \norm{x-b} & = \Bignorm{\sum_{i\in A\setminus B}x_ie_i + \sum
      _{i\in B} (x_i-b_i) + \sum_{i\notin A\cup B} x_ie_i}\\[2ex]
    &\geq \frac{1}{K} \Bignorm{\sum_{i\in A\setminus B}s \vare_ie_i +
      \sum_{i\notin A\cup B} x_ie_i} \geq \frac{1}{KC}
    \Bignorm{\sum_{i\in B\setminus A}s \vare_i e_i + \sum_{i\notin
        A\cup B} x_ie_i} \\[2ex]
    & \geq \frac{1}{K^2C} \Bignorm{\sum_{i\in B\setminus A}x_i e_i +
      \sum_{i\notin A\cup B} x_ie_i} 
    =\frac{1}{K^2C}\bignorm{x-\cG_m(x)}\ .
  \end{align*}
  This completes the proof.
\end{proof}
\begin{rem}
  Let $(e_i)$ be a $1$-unconditional basis of a Banach space $X$. For
  $x\in\coo$ define $\norm{\cdot}_x$ to be the function
  \[
  \norm{z}_x=\bignorm{z+\norm{z}_{\ell_\infty}\cdot x}\ ,
  \]
  which defines a norm on $\cspn
  \{e_i:\,i\in\bn\setminus\supp(x)\}$. Theorem~\ref{thm:greedy-char}
  implies that $(e_i)$ is $C$-greedy if and only if for every
  $x\in\coo$ with $\norm{x}_{\ell_\infty}\leq 1$ the norm
  $\norm{\cdot}_x$ is $C$-democratic. This characterization of greedy
  bases is slightly different from the one given by Konyagin and
  Temlyakov~\cite{kony-tem:99} where they only assume the democracy of
  $\norm{\cdot}_x$ for $x=0$. However, for our purposes, the above
  result has the advantage that the greedy constant is the same as the
  Property~(A) constant.
\end{rem}
We next recall the notion of bidemocracy, which was introduced by
S.~J.~Dilworth, N.~J.~Kalton, Denka Kutzarova and V.~N.~Temlyakov
in~\cite{dkkt:03}, and the corresponding
renorming result~\cite{dosz:11}*{Theorem~2.1}. Suppose that $(e_i)$ is
a seminormalized basis of a Banach space $X$ with biorthogonal
sequence $(e_i^*)$. The \emph{fundamental function }$\varf$ of $(e_i)$
is defined by
\begin{equation*}
  \varf(n) = \sup_{\abs{A} \leq n}\Bignorm{\sum_{i \in A} e_i}\ .
\end{equation*}
The \emph{dual fundamental function }$\varf^*$ is given by
\begin{equation*}
  \varf^{*}(n) = \sup_{\abs{A} \leq n}\Bignorm{\sum_{i \in A}
    e_i^*}\ . 
\end{equation*}
We recall that $(\varf(n)/n)$ is a decreasing function of $n$, since
for any $A \subset \bn$ with $\abs{A} = n\geq 2$ we have
\begin{equation*}
  \Bignorm{\sum_{i \in A} e_i} =
  \frac{1}{n-1}\Bignorm{\sum_{\phtm{j\in A\setminus\{i\}} i \in A}\
    \sum_{j\in A\setminus\{i\}} e_j} \leq \frac{n}{n-1} \varf(n-1)\ .
\end{equation*}
Clearly, $\varf(n) \varf^*(n) \geq n$. We say that $(e_i)$ is
\emph{bidemocratic }if there is a constant $\Delta\geq 1$
(\emph{$\Delta$-bidemocratic}) such that
\begin{equation*}
  \varf(n) \varf^*(n) \leq \Delta n \qquad\text{for all }n\in\bn\ .
\end{equation*}
It is known~\cite{dkkt:03}*{Proposition~4.2} that if $(e_i)$ is
bidemocratic with constant $\Delta$, then both $(e_i)$ and $(e_i^*)$
are democratic with constant $\Delta$. In~\cite{dosz:11} the following
result was proved.
\begin{thm}
  \label{thm:bidemocratic-renorming}
  Suppose that $(e_i)$ is a $1$-unconditional and
  $\Delta$-bidemocratic basis for a Banach space $X$. Then
  \begin{equation}
    \label{eq:1democratic-renorming}
    \tnorm{x} = \max \Big\{ \norm{x},\ \sup_{\abs{A} < \infty}
    {\ts\frac{\varf(\abs{A})}{\abs{A}}} \sum_{i \in A}
    \abs{e_i^*(x)}\Big\}
  \end{equation}
  is an equivalent norm on $X$. Moreover, $(e_i)$ is $1$-unconditional
  and $1$-bidemocratic with respect to $\tnorm{\cdot}$. In particular,
  $(e_i)$ and $(e_i^*)$ are $1$-democratic and $2$-greedy.
\end{thm}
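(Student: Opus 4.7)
The plan is to verify each claim about $\tnorm{\cdot}$ in turn, with the crux being the computation of the fundamental functions in the new norm.

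For the equivalence of $\tnorm{\cdot}$ and $\norm{\cdot}$, the inequality $\norm{x}\leq \tnorm{x}$ is immediate from the definition. For the reverse direction, fix a finite $A\subseteq\bn$, set $\vare_i = \sgn(e_i^*(x))$ for $i \in A$, and estimate $\sum_{i\in A}|e_i^*(x)| = \bigl(\sum_{i\in A}\vare_i e_i^*\bigr)(x) \leq \bignorm{\sum_{i\in A}e_i^*}^* \norm{x} \leq \varf^*(|A|)\norm{x}$, using $1$-unconditionality of $(e_i^*)$ (inherited from that of $(e_i)$). Multiplying by $\varf(|A|)/|A|$ and invoking $\Delta$-bidemocracy yields $\tnorm{x}\leq \Delta\norm{x}$. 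The $1$-unconditionality of $\tnorm{\cdot}$ is then immediate, since both defining terms are monotone in $(|e_i^*(x)|)_i$.

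The main work is the computation of the fundamental functions of $(e_i)$ in $\tnorm{\cdot}$; I expect this to be the main obstacle. For $|A| = n$, taking $B = A$ in the supremum defining $\tnorm{\bi_A}$ already gives $\tnorm{\bi_A}\geq (\varf(n)/n) \cdot n = \varf(n)$. For the matching upper bound I would estimate $(\varf(|B|)/|B|)|A\cap B|$ for arbitrary finite $B$ by splitting into the cases $|B|\leq n$ and $|B|>n$, using monotonicity of $\varf$ in the first case and the fact that $\varf(m)/m$ is nonincreasing (noted just before the theorem) in the second. Thus $\tnorm{\bi_A} = \varf(n)$ for every $A$ of size $n$, and so $\tilde\varf(n) = \varf(n)$. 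For the dual, with $|A_0|=n$, the constraint $(\varf(n)/n)\sum_{i\in A_0}|e_i^*(x)|\leq 1$ built into the unit ball of $\tnorm{\cdot}$ immediately bounds the $\tnorm{\cdot}$-dual norm of $\sum_{i\in A_0} e_i^*$ by $n/\varf(n)$. Hence $\tilde\varf^*(n)\leq n/\varf(n)$, and combined with the general inequality $\tilde\varf(n)\tilde\varf^*(n)\geq n$ one gets equality on both sides.

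Therefore $\tilde\varf(n)\tilde\varf^*(n) = n$, giving $1$-bidemocracy. Since $1$-bidemocracy entails that both $(e_i)$ and $(e_i^*)$ are $1$-democratic (\cf~\cite{dkkt:03}*{Proposition~4.2}), this also follows; alternatively one may observe directly that the identities $\tnorm{\bi_A} = \varf(|A|)$ and the dual analogue $\tnorm{\sum_{i\in A} e_i^*}^{\sim *} = |A|/\varf(|A|)$ hold for every finite $A$ (not just as suprema), and are monotone in $|A|$. Finally, $2$-greediness follows by substituting $K_S = K_U = 1$ and $\Delta = 1$ into inequality~\eqref{eq:connections-between-constants}.
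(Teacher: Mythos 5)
Your proof is correct. Note that the paper does not actually prove this theorem but cites it from \cite{dosz:11}*{Theorem~2.1}; your argument — establishing equivalence via bidemocracy, then computing $\tilde\varf(n)=\varf(n)$ exactly (using that $\varf$ is increasing and $\varf(m)/m$ decreasing to bound $\frac{\varf(|B|)}{|B|}|A\cap B|\leq\varf(|A\cap B|)$), bounding $\tilde\varf^*(n)\leq n/\varf(n)$ directly from the unit-ball constraint, and closing with $\tilde\varf(n)\tilde\varf^*(n)\geq n$ — is the natural and standard route and is complete.
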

By~\cite{dosz:11}*{Theorem~3.1}, the conclusion that $(e_i)$ is
$2$-greedy whenever it is $1$-unconditional and $1$-democratic cannot
be strengthened in general. We now prove a stronger theorem which is the
main result of this
section. First we introduce two pieces of notation. For a vector
$x=\sum x_ie_i$ we write $\abs{x}$ for $\sum \abs{x_i} e_i$, and
$x\geq 0$ if $x_i\geq 0$ for all $i\in\bn$.
\begin{thm}
  \label{thm:1+e-greedy-renorming-for-bidemocratic}
  Let $X$ be a Banach space with an unconditional, bidemocratic
  basis $(e_i)$. Then for all $\vare>0$ there is an equivalent norm on $X$
  with respect to which $(e_i)$ is $1$-unconditional, $1$-bidemocratic
  and $(1+\vare)$-greedy.
\end{thm}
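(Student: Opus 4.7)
The plan is as follows. First, by Theorem~\ref{thm:bidemocratic-renorming}, applied after a preliminary renorming to make $(e_i)$ $1$-unconditional, I may assume that in $\norm{\cdot}$ the basis is already $1$-unconditional and $1$-bidemocratic, so $\varf(n)\varf^*(n)=n$ for all $n$ and $(e_i)$ is $2$-greedy. By Theorem~\ref{thm:greedy-char}, it then suffices to produce an equivalent norm $\tnorm{\cdot}$ with respect to which $(e_i)$ remains $1$-unconditional and $1$-bidemocratic while having Property~(A) with constant~$1+\vare$; such a norm will automatically be $(1+\vare)$-greedy.

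I would construct $\tnorm{\cdot}$ by duality,
\[
\tnorm{x}=\sup\bigl\{\ip{x}{f}:f\in K\bigr\}\ ,
\]
for a suitable convex, balanced, $1$-unconditional set $K\subset X^*$. I need $K$ to satisfy three conditions: (a) $K$ is sandwiched between fixed positive multiples of $B_{X^*}$, so that $\tnorm{\cdot}$ is equivalent to $\norm{\cdot}$; (b) the fundamental and dual fundamental functions of $\tnorm{\cdot}$ are exactly $\varf$ and $\varf^*$, so that $\tnorm{\cdot}$ is $1$-bidemocratic; (c) for every primal greedy rearrangement $y=w+t$ of $x=w+u$ and every $f\in K$, there exists $g\in K$ with $\ip{x}{g}\geq(1+\vare)^{-1}\ip{y}{f}$.

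The natural choice of $g$ in~(c) is the following: keep $f$ unchanged on $\supp(w)$, transplant the values $\abs{f(e_j)}$ for $j\in\supp(t)$ onto $\supp(u)$ via some bijection, with signs matching those of $u$, and set $g=0$ elsewhere. Writing $c$ for the common value of $\abs{e^*_i(u)}=\abs{e^*_j(t)}$ on the relevant supports, a direct computation yields
\[
\ip{x}{g}-\ip{y}{f}=c\sum_{j\in\supp(t)}\bigl(\abs{f(e_j)}-\sgn(e^*_j(t))\,f(e_j)\bigr)\geq 0\ ,
\]
so without any $(1+\vare)$ slack $\ip{x}{g}\geq\ip{y}{f}$ already. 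The real issue is the membership $g\in K$. This functional is obtained from $f$ by harmless sign changes followed by a \emph{dual-side block move} transporting the coefficients of $\abs{f}$ from $\supp(t)$ onto $\supp(u)$, with truncation elsewhere. By the bidemocracy assumption, $(e^*_i)$ is also $1$-unconditional and $1$-democratic, so a single dual-side block move changes $\norm{g}_{X^*}$ by at most a factor~$2$, the greedy constant of the dual basis.

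Accordingly, I plan to take $K$ to be the convex hull of the orbit of $B_{X^*}$ under successive dual-side block moves, with each move penalized by a factor $(1+\vare)^{-1}$. The hard part will be to keep $K$ bounded, so as to preserve~(a), without destroying~(b). A key observation is that the auxiliary norm $\mu(x)=\sup_A \frac{\varf(\abs{A})}{\abs{A}}\sum_{i\in A}\abs{e^*_i(x)}$ from Theorem~\ref{thm:bidemocratic-renorming} is itself $1$-bidemocratic and is already \emph{invariant} under greedy rearrangements, so only the discrepancy between $\norm{\cdot}$ and $\mu(\cdot)$ needs to be tamed by the iterative block-move construction. Intersecting $K$ with a small inflation of $B_{X^*}$ and exploiting the bidemocratic identity $\varf(n)\varf^*(n)=n$ to balance the primal and dual scales should then yield all three of (a)--(c); once these are established, Theorem~\ref{thm:greedy-char} closes the argument.
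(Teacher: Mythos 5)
Your overall strategy --- produce an equivalent norm as a supremum over a convex $1$-unconditional set $K\subset X^*$, then verify Property~(A) by exhibiting, for each norming $f$ of $y=w+t$, a transported $g\in K$ with $\ip{x}{g}\geq(1+\vare)^{-1}\ip{y}{f}$ --- is sound in outline, and the computation $\ip{x}{g}\geq\ip{y}{f}$ for the transplanted $g$ is correct. But the construction of $K$ itself has a genuine gap, in two places. First, the claim that a ``dual-side block move'' changes $\norm{\cdot}^*$ by at most a factor~$2$ (the greedy constant of $(e^*_i)$) is unjustified: a greedy rearrangement in the dual, which is what Property~(A) for $(e^*_i)$ controls, requires the transported coefficients to be \emph{of maximal modulus} among those of $f$, and the coefficients of a norming functional $f$ on $\supp(t)$ need not be the largest coefficients of $f$. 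So the move taking $f$ to $g$ is not in general a greedy rearrangement of $f$, and the factor-$2$ bound does not follow. Second, even granting a uniform bound $M$ per move, your proposed $K$ --- the convex hull of the orbit of $B_{X^*}$ under iterated block moves with a $(1+\vare)^{-1}$ penalty per move --- is not obviously bounded: after $k$ moves the scale can grow like $(M/(1+\vare))^k$, which diverges whenever $M>1+\vare$. You acknowledge this as ``the hard part'' and gesture at intersecting with an inflated ball and exploiting $\varf(n)\varf^*(n)=n$, but that step is not carried out, and it is precisely where the difficulty lies.

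The paper avoids both issues by a one-shot construction rather than an iteration. After the preliminary renorming it sets
\[
\tnorm{x}=\sup\Big\{\bigip{\abs{x}}{x^*+\tfrac{1}{\varf^*(n)}\bi_A}:\ x^*\in\vare B_{X^*},\ n\in\bn,\ A\subset\bn,\ \abs{A}=n\Big\}.
\]
For Property~(A), given $x^*+\tfrac{1}{\varf^*(n)}\bi_A$ norming $x+\bi_B$, one chooses $\At$ so that the averaging part is matched \emph{exactly} (same action on $\supp(x)$, same intersection size with $\Bt$); this requires no bound on arbitrary coefficient transports, only a cardinality bookkeeping. The error coming from the $\vare B_{X^*}$ part is swallowed once and for all via
\[
\ip{\bi_B}{x^*}\leq\vare\norm{\bi_B}=\vare\varf(\abs{\Bt})
=\vare\bigip{x+\bi_{\Bt}}{\tfrac{1}{\varf^*(\abs{\Bt})}\bi_{\Bt}}\leq\vare\tnorm{x+\bi_{\Bt}},
\]
which is exactly where the bidemocratic identity $\varf(n)\varf^*(n)=n$ is used. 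Your closing observation that the averaging norm $\mu$ is ``already invariant under greedy rearrangements'' is the right intuition; the paper's trick is that one only needs to \emph{add} a scaled copy $\vare B_{X^*}$ of the original dual ball to that norming set, not to iterate any corrective operation, and the $\vare$-scaling makes the non-invariant part cost only an $\vare$-fraction in the end. If you want to salvage your approach, I would drop the iterated-orbit construction and instead prove directly that the single set $K=\vare B_{X^*}+\{\tfrac{1}{\varf^*(n)}\bi_A:\abs{A}=n\}$ already has properties (a)--(c).
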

\begin{proof}
  After renorming, we may assume that $(e_i)$ is normalized,
  $1$-unconditional and $1$-bidemocratic. Let $\varf$ and $\varf^*$
  denote the fundamental
  and, respectively, dual fundamental function of $(e_i)$. Fix
  $\vare\in(0,1)$. Define a new norm $\tnorm{\cdot}$ on $X$ as
  follows.
  \[
  \tnorm{x} = \sup \Big\{
  \bigip{\abs{x}}{x^*+{\ts\frac{1}{\varf^*(n)}}\bi_A}:\,x^*\in \vare
  B_{X^*},\ n\in\bn,\ A\subset\bn,\ \abs{A}=n\Big\}\ .
  \]
  It is clear that $(e_i)$ is a $1$-unconditional basis in
  $\tnorm{\cdot}$. We next prove that it also
  satisfies Property~(A) with constant $1+\vare$. Fix
  $x\in\coo$ and $B,\Bt\subset\bn\setminus \supp(x)$ such that
  $\norm{x}_{\ell_\infty}\leq 1$ and $\abs{B}=\abs{\Bt}<\infty$. It
  will be sufficient to prove that $\tnorm{x+\bi_B}\leq (1+\vare)
  \tnorm{x+\bi_{\Bt}}$. We may of course assume that $x\geq 0$.

  Let $n\in\bn$, $A\subset\bn$ and $x^*\in \vare B_{X^*}$ be such that
  $\abs{A}=n$ and
  \begin{equation}
    \label{eq:norming-x+B}
  \begin{aligned}
    \tnorm{x+\bi_B}&=
    \bigip{x+\bi_B}{x^*+{\ts\frac{1}{\varf^*(n)}}\bi_A}\\[2ex]
    &=\ip{x}{x^*}+\ip{\bi_B}{x^*}+
    {\ts\frac{1}{\varf^*(n)}} \ip{x}{\bi_A} +
    {\ts\frac{1}{\varf^*(n)}} \abs{B\cap A}\ .
  \end{aligned}
  \end{equation}
  Without loss of generality we may assume that $\supp(x^*)\cup
  A\subset \supp(x)\cup B$, and hence $x^*\geq 0$. Note that
  \begin{equation}
    \label{eq:x*-on-B}
    \begin{aligned}
      \ip{\bi_B}{x^*} &\leq \vare\norm{\bi_B} =
      \vare\varf(\abs{B})=\vare\varf(\abs{\Bt}) \\[2ex]
      &=
      \vare
      \bigip{x+\bi_{\Bt}}{{\ts\frac{1}{\varf^*\big(\abs{\Bt}\big)}}\bi_{\Bt}}
      \leq \vare\tnorm{x+\bi_{\Bt}}\ .
    \end{aligned}
  \end{equation}
  Now choose $\At\subset\bn$ such that $\At\cap\supp(x)=A\cap\supp(x)$
  and $\abs{\Bt\cap \At}=\abs{B\cap A}$. Then $\abs{\At}=n$ and
  \begin{equation}
    \label{eq:action-of-At}
    \ip{x}{\bi_A} + \abs{B\cap A}=\ip{x}{\bi_{\At}} + \abs{\Bt\cap \At}\
    .
  \end{equation}
  We now obtain
  \begin{align*}
    \tnorm{x+\bi_B&} = \ip{x}{x^*} + \ip{\bi_B}{x^*} +
    {\ts\frac{1}{\varf^*(n)}} \ip{x}{\bi_A} +
    {\ts\frac{1}{\varf^*(n)}} \abs{B\cap A} &
    \text{by~\eqref{eq:norming-x+B}}\\[2ex]
    &\leq \ip{x}{x^*} + \vare\tnorm{x+\bi_{\Bt}} +
    {\ts\frac{1}{\varf^*(n)}} \ip{x}{\bi_{\At}} +
    {\ts\frac{1}{\varf^*(n)}} \abs{\Bt\cap \At} &
    \text{by~\eqref{eq:x*-on-B} and~\eqref{eq:action-of-At}} \\[2ex]
    &\leq 
    \bigip{x+\bi_{\Bt}}{x^*+{\ts\frac{1}{\varf^*(n)}}\bi_{\At}} +
    \vare\tnorm{x+\bi_{\Bt}} & \text{as }x^*\geq 0\\[2ex]
    &\leq (1+\vare) \tnorm{x+\bi_{\Bt}}\ ,
  \end{align*}
  as required. Thus, so far, we have that $(e_i)$ is $1$-unconditional
  and $(1+\vare)$-greedy in $\tnorm{\cdot}$. It is also clear that
  $\tnorm{e_i}=1+\vare$ for all $i\in\bn$. Let $\psi$ and
  $\psi^*$ denote the fundamental and, respectively, dual fundamental
  function of $(e_i)$ with respect to $\tnorm{\cdot}$. Let
  $m,n\in\bn$ and $A,B\subset\bn$ with $\abs{A}=m$ and $\abs{B}=n$. By
  definition of $\tnorm{\cdot}$, in the dual space we have
  $\tnorm{\frac{1}{\varf^*(m)}\bi_A}^*\leq 1$, from which it follows that
  $\psi^*(m)\leq \varf^*(m)$. Also, for any $x^*\in\vare B_{X^*}$ we
  have
  \begin{align*}
    \Bigip{\bi_B}{x^*+{\ts\frac{1}{\varf^*(m)}} \bi_A} & \leq
    \vare\norm{\bi_B} + {\ts\frac{\varf(m)}{m}} \abs{B\cap A}\\[2ex]
    &\leq 
    \vare\varf(n) + {\ts\frac{\varf(\abs{B\cap A})}{\abs{B\cap A}}}
    \abs{B\cap A} \leq (1+\vare) \varf(n) .
  \end{align*}
  Hence $\psi(n)\leq(1+\vare)\varf(n)$, and $(e_i)$ is
  $(1+\vare)$-bidemocratic in $\tnorm{\cdot}$. So if we replace
  $\tnorm{\cdot}$ with $\frac{1}{1+\vare}\tnorm{\cdot}$ and apply
  Theorem~\ref{thm:bidemocratic-renorming}, then we obtain a new norm
  $(1+\vare)$-equivalent to $\tnorm{\cdot}$, and 
  with respect to which $(e_i)$ is normalized, $1$-unconditional,
  $1$-bidemocratic and $(1+\vare)^2$-greedy.
\end{proof}
Let us now observe that our theorem applies to a large class of Banach
spaces and bases. We say that a democratic basis $(e_i)$ (or its
fundamental function $\varf$) has the \emph{upper regularity property
}(or URP for short) if there exists an integer $r>2$ such that
\[
\varf(rn)\leq {\ts \frac12} r\varf(n)\qquad \text{for all }n\in\bn\
.
\]
This is easily seen to be equivalent to the existence of $0<\beta<1$
and a constant $C$ such that
\[
\varf(n)\leq C \big( {\ts \frac{n}{m} }\big)^\beta \varf(m)\qquad
\text{for all }m\leq n\ .
\]
This property was introduced in~\cite{dkkt:03} where it was shown that
a greedy basis of a Banach space with nontrivial type has the URP and
that a greedy basis with the URP is bidemocratic. More precisely, they
showed that if $(e_i)$ is a greedy basis with fundamental function
$\varf$, and there exists a constant $C$ such that
\begin{equation}
  \label{eq:weak-urp}
  \sum_{k=1}^n \frac{\varf(n)}{\varf(k)} \leq Cn\qquad\text{for all
  }n\in\bn\ ,
\end{equation}
then $(e_i)$ is bidemocratic. It is of course clear that the URP
implies~\eqref{eq:weak-urp}.

It is well known that $L_p[0,1]$ for $1<p<\infty$ has nontrivial
type. Thus we obtain the following corollary.
\begin{cor}
  Let $1<p<\infty$. For all $\vare>0$ there is an equivalent norm on
  $L_p[0,1]$ in which the Haar basis is normalized,
  $1$-unconditional, $1$-bidemocratic and $(1+\vare)$-greedy.
\end{cor}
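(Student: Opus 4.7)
The plan is to derive this corollary as a direct application of Theorem~\ref{thm:1+e-greedy-renorming-for-bidemocratic}, so the entire task reduces to verifying the two hypotheses of that theorem for the Haar system in $L_p[0,1]$, namely that the Haar basis is unconditional and bidemocratic. I would state the corollary, and then proceed to check these two properties in turn before invoking the main theorem.

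For unconditionality, I would simply cite the classical theorem of Paley that the Haar system is an unconditional basis of $L_p[0,1]$ for every $1<p<\infty$. There is no obstacle here; this is the starting point of every discussion of the Haar system in $L_p$.

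For bidemocracy, I would use the chain of implications already summarised in the discussion preceding the corollary. By Temlyakov's theorem~\cite{temlyakov:98}, the Haar basis is greedy in $L_p[0,1]$ for $1<p<\infty$. Since $L_p[0,1]$ has nontrivial type for $p$ in this range (type $\min(p,2)$), the result of~\cite{dkkt:03} recalled above guarantees that the Haar basis has the URP. As noted in the paragraph containing~\eqref{eq:weak-urp}, the URP implies the inequality~\eqref{eq:weak-urp}, which in turn implies bidemocracy. Thus both hypotheses of Theorem~\ref{thm:1+e-greedy-renorming-for-bidemocratic} are satisfied.

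With these verifications in hand, the conclusion is immediate: for any $\vare>0$ the theorem supplies an equivalent norm on $L_p[0,1]$ under which the Haar basis is normalized, $1$-unconditional, $1$-bidemocratic, and $(1+\vare)$-greedy. There is no serious obstacle in this argument, since the structural work has already been carried out in the main theorem; the only care needed is to cite the correct classical facts about the Haar system and about type in $L_p$.
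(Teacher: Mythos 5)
Your proposal matches the paper's own argument: the paper derives this corollary precisely by observing that $L_p[0,1]$ has nontrivial type, which (combined with Temlyakov's theorem that the Haar basis is greedy) gives the URP and hence bidemocracy via~\cite{dkkt:03}, so Theorem~\ref{thm:1+e-greedy-renorming-for-bidemocratic} applies. Your write-up just makes explicit the chain of implications that the paper leaves as the discussion immediately preceding the corollary.
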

\begin{rem}
  By the Albiac-Wojtaszczyk characterization, a $1$-greedy basis is
  suppression $1$-unconditional, and hence $2$-unconditional. As shown
  in~\cite{dosz:11}*{Theorem~4.1}, the unconditional constant~$2$
  is in general the best one can say about a $1$-greedy basis. This is why
  the $1$-unconditionality was included in the above results.
\end{rem}

\section{The class of quasi-concave functions}
\label{sec:fundamental-functions}

We denote by $\br^+$ the set of (strictly) positive real
numbers. Recall that the fundamental function $\varf\colon\bn\to\br^+$
of a basis of a Banach space is increasing and $n\mapsto
\frac{\varf(n)}{n}$ is decreasing. Let us now call a function
$\varf\colon [1,\infty)\to\br^+$ defined on the \emph{real }interval
$[1,\infty)$ a \emph{fundamental function }if it is increasing and
$x\mapsto \frac{\varf(x)}{x}$ is decreasing. Observe that every
fundamental function $\varf$ is subadditive. Indeed, for
$x,y\in[1,\infty)$ we have
\[
\varf(x+y) = \frac{\varf(x+y)}{x+y} \cdot x +\frac{\varf(x+y)}{x+y}
\cdot y \leq \frac{\varf(x)}{x} \cdot x +\frac{\varf(y)}{y}
\cdot y =\varf(x)+\varf(y)\ .
\]
The fundamental function of a basis of a Banach space \emph{is }the
restriction to $\bn$ of a fundamental function in the above
sense. Indeed, if $\varf\colon\bn\to\br^+$ is the fundamental function
of a basis, then we can extend it to a function on $[1,\infty)$ by
linear interpolation. A straightforward calculation shows that this
extended function is a fundamental function in the above sense. The
converse is also true, \ie if $\varf\colon [1,\infty)\to\br^+$ is a
fundamental function, then its restriction to $\bn$ is the fundamental
function of a basis. This will be shown in
Proposition~\ref{prop:space-for-fund-fn} at the end of this section.
Given a fundamental function $\varf\colon [1,\infty)\to\br^+$ and a
basis $(e_i)$ of a Banach space, we say that $\varf$ is \emph{a
  fundamental function for }$(e_i)$ if the restriction of $\varf$ to
$\bn$ is the fundamental function of $(e_i)$.
\begin{rem}
  In the literature fundamental functions in the above sense are also known as
  \textit{quasi-concave functions}. See for
  example~\cite{bennett-sharpley:88}*{Definition~5.6 on page 69},
  where quasi-concave functions are defined on the interval
  $[0,\infty)$ and are naturally associated with
  rearrangement-invariant spaces. Since we work with discrete lattices
  corresponding to unconditional bases which in general are not
  symmetric, for us it will be more convenient to work with the
  definition above instead.
\end{rem}
We will now introduce a parameter $\delta$ which provides information
on the growth of fundamental functions. After that we will show that
the concave envelope of a fundamental function is also a fundamental
function.

Let $\varf\colon [1,\infty)\to\br^+$ be a fundamental function. It
will sometimes be more convenient to work with the function
$\lambda\colon[1,\infty)\to\br^+$ defined by
$\lambda(x)=\frac{\varf(x)}{x}$. Note that $\lambda$ is decreasing and
$x\lambda(x)$ is increasing. For $y\in [1,\infty)$ define
\[
\delta_\varf(y)= \liminf _{x\to\infty} \frac{\varf(yx)}{y\varf(x)} =
\liminf _{x\to\infty} \frac{\lambda(yx)}{\lambda(x)}\ .
\]
It follows from properties of $\lambda$ that $\delta_\varf$ is
decreasing and bounded above by~$1$. Hence
\[
\delta(\varf)=\inf_{y\geq 1}\delta_\varf(y)=\lim_{y\to\infty}
\delta_\varf(y)\in [0,1]\ .
\]
Let us now observe that the function $\delta_\varf$ and the parameter
$\delta(\varf)$ depend only on  the values of $\varf$ on $\bn$. Fix
$m\in\bn$. For any real $x\in[1,\infty)$, putting $n=\intp{x}+1$, we
have
\[
\frac{\lambda(mx)}{\lambda(x)} = x\cdot\frac{\lambda(mx)}{x\lambda(x)}
\geq x\cdot \frac{\lambda(mn)}{n\lambda(n)} = \frac{x}{\intp{x}+1}
\cdot \frac{\lambda(mn)}{\lambda(n)}\ .
\]
It follows that for each $y\in [1,\infty)$ we have
\[
\inf _{n\in\bn,\ n\geq \intp{y}+1} \frac{\lambda(mn)}{\lambda(n)}
\geq \inf _{x\in\br,\ x\geq y} \frac{\lambda(mx)}{\lambda(x)} \geq
\frac{\intp{y}}{\intp{y}+1} \cdot \inf _{n\in\bn,\ n\geq \intp{y}+1}
\frac{\lambda(mn)}{\lambda(n)}\ ,
\]
and hence we obtain
\[
\delta_\varf(m)=\liminf _{n\to\infty} \frac{\lambda(mn)}{\lambda(n)}\ .
\]
Thus we have 
\[
\delta(\varf)=\lim_{m\to\infty} \liminf _{n\to\infty}
\frac{\lambda(mn)}{\lambda(n)}\ .
\]
One consequence of all this is that if $(e_i)$ is a basis of a
Banach space, then the parameter $\delta(\varf)$ is the same for
\emph{any }fundamental function $\varf$ for $(e_i)$.

Two fundamental functions $\varf$ and $\psi$ are said to be
\emph{equivalent }if there exist positive real numbers $a$ and $b$
such that $a\varf(x)\leq\psi(x)\leq b\varf(x)$ for all $x\in
[1,\infty)$. In this case we write $\varf\sim\psi$. Note that
equivalence also only depends on the restrictions to $\bn$ of $\varf$
and $\psi$. Indeed, if for some $b>0$ we have $\psi(n)\leq b\varf(n)$
for all $n\in\bn$, then
\[
\psi(x)=x\cdot \frac{\psi(x)}{x} \leq x\cdot
\frac{\psi(\intp{x})}{\intp{x}} \leq 2b\varf(\intp{x})\leq 2b\varf(x)
\]
for all $x\in[1,\infty)$. So for a basis $(e_i)$ of a Banach
space with a fundamental function $\varf$, the property of having
$\delta(\varf)>0$ is invariant under renormings. We now prove a result
about fundamental functions with positive $\delta$-parameter. This
will be used in Theorem~\ref{thm:non-flat-fundamental-fn} in the next
section.
\begin{lem}
  \label{lem:non-flat-fund-fn}
  Let $\varf$ be a fundamental function with $\delta(\varf)>0$. Then
  for all $\vare>0$ and for all $m\in\bn$ there exists a fundamental
  function $\psi\sim\varf$ such that $\delta_\psi(m)>
  \frac{1}{1+\vare}$.
\end{lem}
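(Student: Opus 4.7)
My plan is to construct $\psi$ as a partial geometric sum of rescalings of $\varf$ along powers of $m$, namely
\[
\psi(x) = \sum_{k=0}^{N-1} \frac{\varf(m^k x)}{m^k}
\]
for an $N \in \bn$ chosen sufficiently large in terms of $\vare$ and $\delta(\varf)$. The motivation is that the ratio $\psi(mx)/(m\psi(x))$ becomes a telescoping-style quotient whose deficit from $1$ is controlled by a single boundary term, so that a uniform lower bound on the summands (coming from $\delta(\varf) > 0$) forces that deficit to be small.

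First I would verify the structural properties. Setting $a_k(x) = \varf(m^k x)/m^k = x\lambda(m^k x)$ where $\lambda = \varf(x)/x$, the fact that $\varf$ is increasing shows $\psi$ is increasing, and the fact that $\lambda$ is decreasing shows $\psi(x)/x = \sum_{k=0}^{N-1}\lambda(m^k x)$ is decreasing. Hence $\psi$ is a fundamental function. Since $a_k(x) \leq \varf(x)$ (by monotonicity of $\lambda$) and $a_0(x) = \varf(x)$, we have $\varf(x) \leq \psi(x) \leq N\varf(x)$, so $\psi\sim\varf$.

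The main step is to estimate $\delta_\psi(m)$. A direct computation gives $\psi(mx) = m\bigl(a_1(x)+\dots+a_N(x)\bigr)$ and $m\psi(x) = m\bigl(a_0(x)+\dots+a_{N-1}(x)\bigr)$, hence
\[
\frac{\psi(mx)}{m\psi(x)} = 1 - \frac{a_0(x) - a_N(x)}{a_0(x)+a_1(x)+\dots+a_{N-1}(x)} \ge 1 - \frac{\varf(x)}{a_0(x)+\dots+a_{N-1}(x)}.
\]
To bound the denominator from below I would invoke $\delta(\varf) > 0$: since $\delta_\varf(m^k) \geq \delta(\varf)$ for every $k$, and we only need uniformity over the finitely many indices $k = 0,1,\dots,N-1$, for any fixed $\eta \in (0,\delta(\varf))$ there is an $X_0$ such that $a_k(x) \geq (\delta(\varf)-\eta)\varf(x)$ for all $x \geq X_0$ and all such $k$. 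Therefore the denominator is at least $N(\delta(\varf)-\eta)\varf(x)$ for $x \geq X_0$, giving
\[
\delta_\psi(m) = \liminf_{x\to\infty} \frac{\psi(mx)}{m\psi(x)} \ge 1 - \frac{1}{N(\delta(\varf)-\eta)}.
\]

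To finish, I would choose $\eta = \delta(\varf)/2$ and then take $N$ large enough that $N\delta(\varf)/2 > (1+\vare)/\vare$; a direct rearrangement then yields $\delta_\psi(m) > 1/(1+\vare)$, completing the proof. I anticipate no serious obstacle: the only delicate point is extracting the bound on $a_k$ uniformly in $k$, but this is painless because the range of $k$ is finite, so a single $X_0$ works, and the liminf as $x \to \infty$ is unaffected by behaviour on $[1, X_0]$.
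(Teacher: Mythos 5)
Your proof is correct, and it takes a genuinely different route from the paper's. The paper proceeds by an iterative bootstrapping argument: the key sub-lemma is that if $\delta_\varf(m^2)>\delta$ then one can build $\psi\sim\varf$ with $\delta_\psi(m)>\sqrt{\delta}$ by replacing $\lambda(x)=\varf(x)/x$ with a piecewise geometric interpolation of $\lambda$ between the gridpoints $\{m^{2k}n_0\}$, and one then iterates this step $k$ times with $k$ chosen so that $\delta^{1/2^k}>\frac{1}{1+\vare}$. You instead construct $\psi$ in a single shot as the average $\sum_{k=0}^{N-1}\varf(m^k x)/m^k$ of geometrically rescaled copies of $\varf$, exploit the telescoping identity $\psi(mx)/(m\psi(x))=1-(a_0-a_N)/\sum_{k=0}^{N-1}a_k$ where $a_k(x)=\varf(m^k x)/m^k$, and use the uniform lower bound $a_k(x)\gtrsim\delta(\varf)\varf(x)$ (available for $k<N$ and $x$ large because only finitely many indices are involved) to bound the deficit by $\approx 1/(N\delta(\varf))$. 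All the claimed structural properties of $\psi$ check out: each $a_k$ is increasing, each $a_k(x)/x=\lambda(m^k x)$ is decreasing, $a_0=\varf$, and $a_k\le\varf$ by monotonicity of $\lambda$, so $\varf\le\psi\le N\varf$. Your argument is more elementary, avoids the interpolation construction and the iteration, and gives an explicit quantitative choice $N\gtrsim 1/(\vare\delta(\varf))$. The paper's approach does produce a $\psi$ that literally coincides with $\varf$ at infinitely many points, but that extra feature is not used elsewhere, so nothing is lost by your route.
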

\begin{proof}
  It is enough to show that if $\delta_\varf(m^2)>\delta$, then there
  exists a fundamental function
  $\psi\sim\varf$ such that $\delta_\psi(m)>\sqrt{\delta}$. Indeed,
  assuming this result, we fix $0<\delta<\delta(\varf)$, choose
  $k\in\bn$ with $\delta^{\frac{1}{2^k}}>\frac{1}{1+\vare}$, and obtain
  fundamental functions $\varf=\varf_0\sim\varf_1\sim\dots\sim\varf_k$
  such that $\delta_{\varf_j}\big( m^{2^{k-j}}
  \big)>\delta^{\frac{1}{2^j}}$ for $j=0,1,2,\dots,k$. Putting
  $\psi=\varf_k$ completes the proof.

  Set $\lambda(x)=\frac{\varf(x)}{x}$, $x\in[1,\infty)$. To prove our
  initial claim, choose $n_0\in\bn$ such that $\lambda(m^2x)>\delta
  \lambda(x)$ for all real $x\geq n_0$. We now define a new
  function $\mu\colon[1,\infty)\to\br^+$ as follows. We set
  $\mu(x)=\lambda(x)$ for all real $x\in[1,n_0]$ and for all integers
  $x$ of the form $x=m^{2k}n_0$, $k=0,1,2,\dots$. We then extend the
  definition of $\mu$ by interpolation as follows. Given a real number
  $x\in [n_0,\infty)$, we fix an integer $k\geq 0$ such that $x\in
  [n,m^2n]$, where $n=m^{2k}n_0$. (Note that $k$ is unique unless
  $x\in \{m^{2j}n_0:\, j\in\bn\}$.) Then there is a unique
  $\theta\in[0,1]$ such that $x=n^{1-\theta}(m^2n)^\theta$. We define
  \[
  \mu (x)= \lambda(n)^{1-\theta} \lambda(m^2n)^{\theta}\ .
  \]
  Note that for $x=n$ and $x=m^2n$ this agrees with the previous
  definition of $\mu(x)=\lambda(x)$. It follows that $\mu(x)$ is
  well-defined, and in particular it does not depend on the choice of $k$
  when $x\in \{m^{2j}n_0:\, j\in\bn\}$. We now prove the following
  properties for each integer $k\geq 0$ with $n=m^{2k}n_0$.
  \begin{mylist}{(iii)}
  \item
    $\mu(x)$ is decreasing and $x\mu(x)$ is increasing on $[n,m^2n]$.
  \item
    $\delta \lambda(x)\leq\mu(x)\leq m^2\lambda(x)$ for all
    $x\in [n,m^2n]$,
  \item
    $\mu (mx)\geq \sqrt{\delta}\mu(x)$ for all $x\in [n,m^2n]$.
  \end{mylist}
  We will then set $\psi(x)=x\mu(x)$ for each $x\in[1,\infty)$. Since
  $\mu=\lambda$, and hence $\psi=\varf$, on the set
  $[1,n_0]\cup \{m^{2k}n_0:\,k\geq 0\}$, property~(i) implies
  that $\psi$ is a fundamental function, which is equivalent to
  $\varf$ by~(ii), and satisfies $\delta_\psi(m)\geq\sqrt{\delta}$
  by~(iii). This proves the initial claim, and hence the lemma.

  To see (i) simply differentiate the functions
  \[
  \lambda(n)^{1-\theta} \lambda(m^2n)^{\theta}\qquad
  \text{and}\qquad n^{1-\theta}
  (m^2n)^{\theta}\lambda(n)^{1-\theta}\lambda(m^2n)^{\theta}
  \]
  with respect to~$\theta$.

  Next, fix $\theta\in[0,1]$ and set $x=n^{1-\theta}(m^2n)^\theta$. By
  the properties of $\lambda$, we have
  \begin{align*}
    \mu(x) &= \lambda(n)^{1-\theta} \lambda(m^2n)^{\theta} \leq
    \lambda (n) = n\lambda(n)\cdot\frac{1}{n} \leq
    x\lambda(x)\cdot\frac{1}{n}\\[2ex]
    &= n^{1-\theta} (m^2n)^{\theta} \cdot \lambda (x) \cdot\frac{1}{n}
    = m^{2\theta} \lambda(x) \leq m^2 \lambda (x) \ ,
  \end{align*}
  and, since $\lambda(m^2n)\geq \delta \lambda (n)$, we have
  \[
  \mu(x)=\lambda(n)^{1-\theta} \lambda(m^2n)^{\theta} \geq \delta
  ^{\theta} \lambda(n) \geq \delta \lambda(x)\ .
  \]
  Hence (ii) follows. Finally, fix $0\leq\theta\leq 1$. In
  order to verify~(iii) we need to show that
  \begin{equation}
    \label{eq:sqrt-delta}
    \frac{\mu \big( m\cdot n^{1-\theta} (m^2n)^{\theta}
      \big)}{\mu \big( n^{1-\theta} (m^2n)^{\theta} \big)}
    \geq\sqrt{\delta}\ .
  \end{equation}
  We consider two cases. When $0\leq\theta \leq \frac{1}{2}$, we can write
  $m\cdot n^{1-\theta} (m^2n)^{\theta} =  n^{1-\theta'}
  (m^2n)^{\theta'}$, where $\theta+\frac{1}{2}=\theta'$. Then, since
  $\lambda(m^2n)\geq \delta\lambda(n)$ and $\theta'-\theta>0$, the
  left-hand side of~\eqref{eq:sqrt-delta} becomes
  \[
  \frac{\lambda(n)^{1-\theta'}
    \lambda(m^2n)^{\theta'}}{\lambda(n)^{1-\theta}
    \lambda(m^2n)^{\theta}}  \geq \lambda(n)^{\theta-\theta'}
  \cdot \big( \delta\lambda(n)\big)^{\theta'-\theta} =
  \sqrt{\delta}\ .
  \]
  In the second case, when $\frac{1}{2}\leq \theta\leq 1$, we have $m\cdot
  n^{1-\theta} (m^2n)^{\theta} =  (m^2n)^{1-\theta'}
  (m^4n)^{\theta'}$, where
  $\theta+\frac{1}{2}=1+\theta'$. Then the left-hand side
  of~\eqref{eq:sqrt-delta} becomes
  \begin{align*}
    \frac{\lambda(m^2n)^{1-\theta'}
    \lambda(m^4n)^{\theta'}}{\lambda(n)^{1-\theta}
    \lambda(m^2n)^{\theta}}  & \geq
    \frac{\lambda(m^2n)^{1-\theta'}
    \big(\delta\lambda(m^2n)\big)^{\theta'}}{\lambda(n)^{1-\theta}
    \lambda(m^2n)^{\theta}} \\[2ex]
  &= \delta ^{\theta'} \cdot
  \frac{\lambda(m^2n)^{1-\theta}}{\lambda(n)^{1-\theta}} \geq
  \delta^{\theta'+1-\theta} = \sqrt{\delta}\ ,
  \end{align*}
  as required.
\end{proof}
We next prove that every fundamental function is equivalent to a
concave one. This is standard (see for
example~\cite{bennett-sharpley:88}*{Proposition~5.10}), but we repeat
the simple proof here as we need a further property concerning the
$\delta$ parameter.
\begin{lem}
  \label{lem:concave-fund-fn}
  Let $\varf$ be a fundamental function. Then there exists a concave
  fundamental function $\psi$ such that $\varf(x)\leq\psi(x)\leq
  2\varf(x)$ for all $x\in [1,\infty)$. Moreover, we have
  $\delta_\psi(y)\geq \delta_\varf(y)$ for all $y\in [1,\infty)$.
\end{lem}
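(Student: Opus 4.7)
My plan is to construct $\psi$ as the \emph{concave envelope} of $\varf$, realised through its sup representation. Concretely, I would set
\[
\psi(x) = \sup \Bigl\{ \sum_{i=1}^n \theta_i \varf(u_i) : n\geq 1,\ u_i\in[1,\infty),\ \theta_i\geq 0,\ \sum_i\theta_i = 1,\ \sum_i \theta_i u_i = x \Bigr\}
\]
for each $x\in[1,\infty)$. This formulation makes the structural properties routine. Concavity follows from combining near-optimal convex combinations for $x_1$ and $x_2$ with weights $\alpha$ and $1-\alpha$ into a combination for $\alpha x_1 + (1-\alpha) x_2$. The inequality $\psi\geq \varf$ is immediate (take $n=1,\ u_1=x$), and $\psi$ is increasing because raising a single $u_j$ in a near-optimal combination for $x_1$ yields one for $x_2 \geq x_1$ of no smaller value. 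For the decrease of $\psi(x)/x$, the substitution $\mu_i = \theta_i u_i/x$ recasts the ratio as $\sup \sum_i \mu_i \lambda(u_i)$ over $\mu_i\geq 0$, $\sum_i\mu_i = 1$, $\sum_i \mu_i/u_i = 1/x$; given an optimising choice for $x_2$, blending in weight $1-\alpha$ at $u=1$ (with $\alpha\in[0,1]$ chosen so the linear constraint is met) produces a feasible combination for any $x_1\leq x_2$ of value at least as large, because $\lambda(1)$ dominates every $\lambda(u_i)$.

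For the bound $\psi\leq 2\varf$, given any convex combination $\{(u_i,\theta_i)\}$ for $x$, I would split $\sum_i \theta_i\varf(u_i)$ into indices with $u_i\leq x$ and those with $u_i > x$. Since $\varf$ is increasing and the weights sum to at most $1$, the first group contributes at most $\varf(x)$. For the second, $\lambda=\varf/x$ being decreasing gives $\varf(u_i)\leq (\varf(x)/x)u_i$ for $u_i\geq x$, so that group contributes at most $(\varf(x)/x)\sum_i \theta_i u_i = \varf(x)$. Passing to the supremum yields $\psi(x)\leq 2\varf(x)$.

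The main work lies in the $\delta$-bound, which is where I expect the principal obstacle. Fix $y\geq 1$ and any $c < \delta_\varf(y)$, and choose $N$ so that $\varf(yz)\geq yc\,\varf(z)$ for all $z\geq N$. For $x\geq N$ and $\vare > 0$, pick a combination $\{(u_i,\theta_i)\}$ with $\sum_i\theta_iu_i = x$ and $\sum_i\theta_i\varf(u_i) > \psi(x) - \vare$. The scaled combination $\{(yu_i,\theta_i)\}$ is feasible for $yx$, giving $\psi(yx)\geq \sum_i\theta_i\varf(yu_i)$. Partition the indices as $I = \{i : u_i < N\}$ and its complement: the contribution $\sum_{i\in I}\theta_i\varf(u_i)$ is at most $\varf(N)$ (since $\varf$ is increasing and $\sum_{i\in I}\theta_i\leq 1$), while for $i\notin I$ we have $\varf(yu_i)\geq yc\,\varf(u_i)$. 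Dropping the $I$-terms from the lower bound on $\psi(yx)$ and applying the pointwise inequality on the rest produces
\[
\psi(yx)\ \geq\ yc\bigl(\psi(x) - \varf(N) - \vare\bigr).
\]
Letting $\vare\to 0$ and then $x\to\infty$ (with $\psi(x)\to\infty$ when $\varf$ is unbounded) yields $\delta_\psi(y)\geq c$; in the bounded case both $\delta_\varf(y)$ and $\delta_\psi(y)$ reduce to $1/y$ and the conclusion is automatic. Since $c < \delta_\varf(y)$ was arbitrary, $\delta_\psi(y)\geq \delta_\varf(y)$. The delicate point is that the bounds $\varf\leq\psi\leq 2\varf$ alone only give $\delta_\psi\geq \tfrac12\delta_\varf$, losing a factor of $2$; the fix is the explicit convex-combination representation, which allows one to transport a near-optimal combination at $x$ to one at $yx$ by simple scaling and to isolate the contribution from the harmless "small $u_i$" part.
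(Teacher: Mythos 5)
Your proof is correct and follows essentially the same route as the paper: both define $\psi$ as the concave envelope via the sup over convex combinations, establish $\varf\leq\psi\leq 2\varf$ by splitting indices at $x$, and prove the $\delta$-bound by transporting a near-optimal combination at $x$ to one at $yx$ by scaling, then controlling the small-$u_i$ contribution. Your treatment of the tail term is marginally cleaner (bounding it by the constant $\varf(N)$ and letting $\psi(x)\to\infty$ absorb it, rather than the paper's auxiliary threshold $z$ with $\delta y\varf(w)<\vare\varf(z)$), but the underlying ideas coincide.
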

\begin{proof}
  We let $\psi\colon [1,\infty)\to\br^+$ be the concave envelope of
  $\varf$. Recall that this is the (pointwise) smallest concave
  function dominating $\varf$, and is given by
  \[
  \psi(x)=\sup \sum_{i=1}^n t_i\varf(x_i)\ ,
  \]
  where the supremum is taken over all convex combinations
  $x=\sum_{i=1}^n t_ix_i$ of numbers $x_1,\dots,x_n\in[1,\infty)$. Of
  course this concave envelope exists if and only if the above
  supremum is finite for every $x$. To verify that this is true in our
  case, note that either $x_i<x$ and $\varf(x_i)\leq\varf(x)$, or
  $x_i\geq x$ and we have $\varf(x_i)=\frac{\varf(x_i)}{x_i} x_i\leq
  \frac{\varf(x)}{x} x_i$. It follows that
  \[
  \sum _{i=1}^n t_i\varf(x_i)\leq \varf(x)\sum _{x_i<x} t_i +
  {\ts \frac{\varf(x)}{x} }
  \sum_{x_i\geq x} t_ix_i \leq 2\varf(x)\ .
  \]
  It follows that $\psi$ exists, and $\varf(x)\leq\psi(x)\leq
  2\varf(x)$ for all $x$. We next show that $\psi$ is a fundamental
  function. Let $1\leq x\leq y$, and let $x=\sum_{i=1}^n t_ix_i$ be a
  convex combination of elements of $[1,\infty)$. Then $\sum_{i=1}^n
  t_i(x_i+y-x)=y$, and hence
  \[
  \psi(y)\geq \sum_{i=1}^n t_i\varf(x_i+y-x)\geq \sum_{i=1}^n
  t_i\varf(x_i)\ .
  \]
  Taking supremum yields $\psi(y)\geq \psi(x)$, and so $\psi$ is
  increasing. Next, consider a convex combination $y=\sum_{i=1}^n
  t_iy_i$. Let $z=\frac{x}{y}$. Then $x=\sum_{i=1}^n t_i zy_i$, and
  so
  \begin{align*}
    \frac{\psi(x)}{x} &\geq \frac{1}{x} \sum_{i=1}^n t_i \varf(zy_i) =
    \sum_{i=1}^n t_i \frac{\varf(zy_i)}{zy_i} \cdot
    \frac{y_i}{y}\\[2ex]
    &\geq \sum_{i=1}^n t_i \frac{\varf(y_i)}{y_i} \cdot
    \frac{y_i}{y} =\frac{1}{y} \sum_{i=1}^n t_i\varf(y_i)\ .
  \end{align*}
  After taking supremum, this implies $\frac{\psi(x)}{x} \geq
  \frac{\psi(y)}{y}$, which completes the proof that $\psi$ is a
  fundamental function.

  To show the ``moreover'' part of the lemma, first observe that if
  $\varf$ is bounded, then $\lim _{x\to\infty}\varf(x)$ exists, and hence
  $\delta_\varf(y)=\frac{1}{y}$ for all $y\in[1,\infty)$. Since in
  this case $\psi$ is also bounded, we have
  $\delta_\varf(y)=\delta_\psi(y)=\frac{1}{y}$ for all $y$. Assume now
  that $\varf$ is unbounded. Fix $y\geq 1$. It will be enough to show
  that if $0<\delta<\delta_\varf(y)$ and $\vare>0$, then
  $\delta\leq(1+\vare)\delta_\psi(y)$. Choose $w\in[1,\infty)$ such that
  $\varf(yx)\geq \delta y\varf(x)$ for all $x\geq w$. Since
  $\varf(x)$ tends to infinity, we can then choose $z>w$ such that
  $\frac{\delta y}{\vare}\varf(w)<\varf(z)$.

  We will show that if $x\geq z$, then $(1+\vare) \psi(yx)\geq
  \delta y\psi(x)$, which will complete the proof. Fix a convex
  combination $x=\sum _{i=1}^n t_ix_i$. By the definition of $\psi$,
  we have
  \[
  \psi(yx)\geq \sum_{i=1}^n t_i\varf(yx_i)\ .
  \]
  Let $I=\big\{ i\in\{1,\dots,n\}:\,x_i\geq w\big\}$. By the choice of
  $z$, we have
  \[
  \delta y \sum _{i\notin I} t_i \varf(x_i) \leq \delta y \varf(w)
  < \vare\varf(z)\leq \vare\psi(yx)\ .
  \]
  It follows from this and from the choice of $w$ that
  \[
  \psi(yx)\geq \sum_{i\in I} t_i\varf(yx_i)\geq \delta y
  \sum_{i\in I} t_i \varf(x_i) \geq \delta y \sum_{i=1}^n t_i
  \varf(x_i) - \vare\psi(yx)\ .
  \]
  Since this holds for all convex combinations $x=\sum _{i=1}^k
  t_ix_i$, we obtain $(1+\vare)\psi(yx)\geq \delta y \psi(x)$, as
  required.
\end{proof}
Our next result shows that every fundamental function arises from a
basis in a Banach space.
\begin{prop}
  \label{prop:space-for-fund-fn}
  Let $\varf\colon[1,\infty)\to\br^+$ be a fundamental function. Then
  there is a Banach space with a $1$-unconditional basis $(e_i)$ whose
  fundamental function is the restriction of $\varf$ to $\bn$.
\end{prop}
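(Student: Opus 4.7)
The plan is to define a symmetric (in particular $1$-unconditional) Lorentz-type norm on $\coo$ whose unit vector basis has fundamental function precisely $\varf|_\bn$, and then complete. Concretely, for $x=\sum x_ie_i\in\coo$ I would set
\[
\tnorm{x}=\sup\Bigl\{{\ts\frac{\varf(\abs{A})}{\abs{A}}}\sum_{i\in A}\abs{x_i}:\,
A\subset\bn,\ 0<\abs{A}<\infty\Bigr\}\ .
\]
Since $\varf(n)/n\leq\varf(1)$ for every $n\in\bn$ (because $x\mapsto\varf(x)/x$ is decreasing), the supremum is bounded above by $\varf(1)\norm{x}_{\ell_1}$, so $\tnorm{\cdot}$ is a well-defined seminorm on $\coo$; the identity $\tnorm{e_i}=\varf(1)>0$ makes it a norm. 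The expression depends only on $(\abs{x_i})$, so $(e_i)$ is $1$-unconditional (in fact $1$-symmetric) by inspection.

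The only real calculation is checking that the fundamental function is the correct one. For a finite $B\subset\bn$ and any finite $A\subset\bn$, monotonicity of $\varf$ together with the decreasing character of $\varf(x)/x$ yields
\[
{\ts\frac{\varf(\abs{A})}{\abs{A}}}\abs{A\cap B}\leq
{\ts\frac{\varf(\abs{A\cap B})}{\abs{A\cap B}}}\abs{A\cap B}
=\varf(\abs{A\cap B})\leq\varf(\abs{B})\ ,
\]
so $\tnorm{\bi_B}\leq\varf(\abs{B})$; the choice $A=B$ gives equality. Hence $\tnorm{\bi_B}=\varf(\abs{B})$ for every finite $B$, and taking the supremum over $\abs{B}\leq n$ shows that the fundamental function of $(e_i)$ in $(\coo,\tnorm{\cdot})$ coincides with $\varf$ on $\bn$.

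To finish, I would let $X$ be the completion of $(\coo,\tnorm{\cdot})$. By $1$-unconditionality each coordinate functional and each support projection $P_A$ is $\tnorm{\cdot}$-contractive on $\coo$ and therefore extends to $X$; a routine approximation argument (given $\xi\in X$ and $\vare>0$, pick $x\in\coo$ with $\tnorm{\xi-x}<\vare/2$, and note that for every $N\geq\max\supp(x)$ one has $\tnorm{\xi-\sum_{i\leq N} e_i^*(\xi)\, e_i}=\tnorm{P_{\{N+1,N+2,\dots\}}(\xi-x)}<\vare$) shows that the partial sums of every $\xi\in X$ converge to $\xi$. Thus $(e_i)$ is a $1$-unconditional Schauder basis of $X$ whose fundamental function is $\varf|_\bn$. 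The whole argument is essentially bookkeeping; the one place where any thought is needed is the upper bound $\tnorm{\bi_B}\leq\varf(\abs{B})$, which crucially exploits the fact that $\varf(x)/x$ (not $\varf$ itself) is the monotone quantity.
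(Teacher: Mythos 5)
Your proof is correct and is essentially the paper's construction, specialized to the family $\cF$ of \emph{all} finite subsets of $\bn$: the paper defines $\norm{x}=\norm{x}_{\ell_\infty}\vee\sup\{\frac{\varf(\abs{A})}{\abs{A}}\sum_{i\in A}\abs{x_i}:A\in\cF\}$ for an arbitrary family $\cF$ containing a set of every size, and your norm is the case $\cF=\{\text{all finite sets}\}$ (with the $\ell_\infty$ term then redundant once $\varf(1)=1$). The extra generality of the paper's $\cF$ is not needed for the proposition itself; it is exploited only in the subsequent remark, where choosing $\cF$ small (e.g.\ Schreier sets) produces non-symmetric, non-democratic, or non-bidemocratic examples, whereas your choice always yields a $1$-symmetric basis. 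The key inequality $\frac{\varf(\abs{A})}{\abs{A}}\abs{A\cap B}\leq\varf(\abs{A\cap B})\leq\varf(\abs{B})$ is the same in both, and your closing remarks about passing to the completion are routine and correct.
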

\begin{proof}
  Let $\cF$ be a family of finite subsets of $\bn$. The only condition
  we impose on $\cF$ that it should contain for every $n\in\bn$ a set
  of size $n$. By scaling we may assume that $\varf(1)=1$. Define a
  norm $\norm{\cdot}$ on the space $\coo$ of finite sequences as
  follows:
  \[
  \norm{x}=\norm{x}_{\ell_\infty} \vee \sup \Big\{ {\ts
    \frac{\varf(\abs{A})}{\abs{A}} } \sum_{i\in A} \abs{x_i}:\,
  A\in\cF\Big\}\ ,\qquad x=(x_i)\in\coo\ .
  \]
 It is clear that $(e_i)$ is a normalized, $1$-unconditional basis of
 the completion $X$ of $\big(\coo,\norm{\cdot}\big)$. Now let
 $m,n\in\bn$, let $A\in\cF$ with $m=\abs{A}$, and let $B\subset\bn$
 with $\abs{B}=n$. Then
 \[
 \frac{\varf(\abs{A})}{\abs{A}} \abs{B\cap A} \leq
 \frac{\varf(\abs{B\cap A})}{\abs{B\cap A}} \abs{B\cap A} =
 \varf(\abs{B\cap A}) \leq \varf(n)\ .
 \]
 It follows that $\bignorm{\sum_{i\in B} e_i}\leq\varf(n)$. On the
 other hand, since $A\in\cF$, we have $\bignorm{\sum_{i\in A}
   e_i}\geq\varf(m)$. Thus the fundamental function of $(e_i)$ is
 indeed $\varf$.
\end{proof}
\begin{rem}
  For a continuous version
  see~\cite{bennett-sharpley:88}*{Proposition~5.8}, where they show
  that every quasi-concave function is the fundamental function of a
  rearrangement-invariant space. However, in our result the basis
  constructed clearly need not be symmetric, or indeed even
  democratic. If for some $\delta>0$ every finite $E\subset\bn$ has a
  subset $A\in\cF$ with $\abs{A}\geq\delta\abs{E}$, then $(e_i)$ is
  $\frac{1}{\delta}$-democratic. If $\cF$ is the set of \emph{all
  }subsets of $\bn$, then $(e_i)$ is bidemocratic.

  It is also possible for $(e_i)$ to be democratic but not
  bidemocratic. For this to happen $\varf$ cannot be arbitrary. For
  example, if $\varf$ has the URP and $(e_i)$ is democratic, then
  $(e_i)$ is automatically
  bidemocratic~\cite{dkkt:03}*{Proposition~4.4}. However, if
  $\varf(n)=n$, say, and $\cF$ is
  the family $\cS$ of Schreier sets, \ie sets $A\subset \bn$ with
  $\abs{A}\leq \min A$, then the dual fundamental function cannot be
  bounded otherwise $X^*$ would be isomorphic to $\co$, and hence $X$
  would be isomorphic to $\ell_1$.
\end{rem}

We will later prove a renorming result for bases with fundamental
function $\varf$ satisfying $\delta(\varf)>0$. We conclude this
section by observing that there are
bases with $\delta(\varf)=0$. Fix integers $1=n_1<n_2<\dots$. Define
$\varf\colon\bn\to\br^+$ by setting $\varf(1)=1$ and keeping
$\frac{\varf(n)}{n}$ constant on intervals $[n_k,n_{k+1}]$ when $k$ is
odd, and keeping $\varf$ constant on intervals $[n_k,n_{k+1}]$ when
$k$ is even. Extend $\varf$ to a fundamental function defined on
$[1,\infty)$. If the $n_k$ are sufficiently rapidly increasing, then
$\delta_\varf(m)=0$ for all $m\in\bn$. By
Proposition~\ref{prop:space-for-fund-fn} this $\varf$ is a
fundamental function for some Schauder basis.

\section{The general case}
\label{sec:general}

In this section we will prove Theorem~\ref{mainthm:1+e-democratic} and
give a positive answer to
Problem~\ref{problem:1+e-greedy-renorming-general} in the case the
fundamental function $\varf$ has $\delta(\varf)>0$. We will require
the following crucial lemma.
\begin{lem}
  \label{lem:flat-norming-fnl}
  Let $(e_i)$ be a normalized, $1$-unconditional, $\Delta$-democratic
  basis of a Banach space $X$ with fundamental function $\varf$. Given
  $0<q<1$, fix $C>\frac{\Delta}{q(1-q)}$ and set
  \[
  \cA= \Big\{ A\subset\bn:\, A\text{ finite and }\bignorm{{\ts
      \frac{\varf(\abs{A})}{\abs{A}}} \bi_A}^*\leq C \Big\}\ .
  \]
  Then for every finite $E\subset\bn$ there exists $A\in\cA$ such that
  $A\subset E$ and $\abs{A}\geq q\abs{E}$.
\end{lem}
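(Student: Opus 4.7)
\textit{Plan.} My approach will be a thresholding construction using an extremal functional for $\bi_E$, combined with the $\Delta$-democracy level-set bound. By $1$-unconditionality we have $\norm{\bi_A}^{*}=\sup\bigl\{\sum_{i\in A} x_i : x=\sum x_i e_i\in B_X,\ x_i\geq 0\bigr\}$, and by solidity we may restrict to $x$ supported in $A$.

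Fix a finite $E\subset \bn$ with $|E|=n$. If $\norm{\bi_E}^{*}\leq Cn/\varf(n)$, take $A:=E$. Otherwise choose $y\in B_X$, $y\geq 0$, nearly extremal for $\norm{\bi_E}^{*}$. For any $t>0$, the set $F_t:=\{i\in E:y_i\geq t\}$ satisfies
\[
1\geq \norm{y}\geq t\norm{\bi_{F_t}}\geq t\varf(|F_t|)/\Delta,
\]
so $\varf(|F_t|)\leq \Delta/t$; since $k\mapsto \varf(k)/k$ is decreasing, this yields $|F_t|\leq \Delta n/(t\varf(n))$. Choosing $t:=\Delta/((1-q)\varf(n))$ gives $|F_t|\leq (1-q)n$, so $A:=E\setminus F_t$ has $|A|\geq qn$ and $y_i< t$ for every $i\in A$.

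The main step is to show that this specific $A$ lies in $\cA$. The bound $y_i<t$ on $A$ controls only the single functional $y$, not the supremum defining $\norm{\bi_A}^{*}$; I therefore plan to argue by contradiction. If $x\in B_X$, $x\geq 0$, supported in $A$, satisfies $\sum_{i\in A}x_i>C|A|/\varf(|A|)$, then $x$ and $y|_{F_t}$ have disjoint supports, and I would combine them through an appropriately scaled lattice construction to obtain $w\in B_X$ with $\sum_{i\in E}w_i$ strictly exceeding $\norm{\bi_E}^{*}$, contradicting the near-extremality of $y$. The precise threshold $C>\Delta/(q(1-q))$ should emerge from balancing the ``gain'' $\sum_{i\in A} x_i>Cqn/\varf(n)$ against the ``missing mass'' $\sum_{i\in A} y_i<n\Delta/((1-q)\varf(n))$: the required inequality $Cq/\Delta>1/(1-q)$ is exactly $C>\Delta/(q(1-q))$.

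The main technical obstacle is the normalization in this combination. The naive choice $w=\tfrac12\bigl(y|_{F_t}+x\bigr)$ only yields the trivial bound $\sum_{i\in E} w_i\leq \norm{\bi_E}^{*}$, since $\norm{y|_{F_t}+x}\leq 2$ and the factor $1/2$ cannot be compensated directly. A sharper estimate---perhaps the lattice supremum $y\vee x$, or a refined bound exploiting that $\norm{y|_{F_t}}<1$ strictly whenever $y$ has non-trivial mass on $A$, or an iterative refinement of the choice of $A$ using a potential function---will be needed to convert the hypothesis $\sum_{i\in A} x_i>C|A|/\varf(|A|)$ into a strict violation of the definition of $\norm{\bi_E}^{*}$.
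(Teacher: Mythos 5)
Your thresholding instinct is in the right spirit, but the proposal has a genuine gap at exactly the step you flag, and I do not think the single-pass primal construction can be repaired. The core problem is that you threshold a \emph{single} norming vector $y\in B_X$ for the functional $\bi_E\in X^*$, and then try to deduce a bound on $\norm{\bi_A}^*=\sup_{x\in B_X}\sum_{i\in A}x_i$, which is a supremum over the whole unit ball. Knowing $y_i<t$ on $A$ only says $y$ is small there; it gives no control on \emph{other} vectors $x\in B_X$ supported in $A$, and the contradiction you try to set up (splicing $y\restriction_{F_t}$ with $x$) hits exactly the normalization wall you describe. There is no clean lattice or convexity trick to avoid the factor $2$: for example in $\ell_1$ the disjointly supported pieces $y\restriction_{F_t}$ and $x$ genuinely satisfy $\norm{y\restriction_{F_t}+x}=\norm{y\restriction_{F_t}}+\norm{x}$, so the naive combination is tight. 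More fundamentally, $A$ being the set where the primal $y$ is small is essentially unrelated to $\bi_A$ being small as a functional; these are different quantities and one does not dominate the other.

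The paper's proof works on the dual side and \emph{iterates}, which is exactly the ``iterative refinement'' you hint at but do not carry out. Instead of one norming vector for $\bi_E\in X^*$, it builds a sequence of norming \emph{functionals} $z^{(1)},z^{(2)},\dots\in B_{X^*}$: at step $k$ it takes $z^{(k)}\geq 0$ norming the remaining vector $\bi_{F_k}\in X$, sets $E_k$ to be the coordinates where the running sum $z^{(1)}_i+\dots+z^{(k)}_i$ first reaches level $\delta$, removes $E_k$, and repeats. The set $A=\bigcup_k E_k$ then satisfies $\delta\bi_A\leq z^{(1)}+\dots+z^{(m)}$ coordinatewise (each running sum is trapped between $\delta$ and $1+\delta$), and since $\norm{\cdot}^*$ is monotone by $1$-unconditionality this gives $\norm{\bi_A}^*\leq m/\delta$. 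Democracy then bounds the number of steps $m$ so that the process stops while $|A|\geq qn$, and this produces the constant $C$. So what your approach is missing is not just a sharper inequality at the last step, but the two structural features that make the bound possible: working with dual functionals that can \emph{dominate} $\bi_A$, and iterating so that the dominating functional has controlled norm $m$. I would recommend discarding the single-$y$ primal construction and trying to reproduce this iterative dual scheme.
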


\begin{proof}
  Choose $\delta>0$ such that $C>\frac{(1+\delta)\Delta}{\delta
    q(1-q)}$. Let $n\in\bn$ and $E\subset \bn$ with $\abs{E}=n$. We
  inductively construct $z^{(1)}, z^{(2)}, \dots$ in $B_{X^*}$ and
  pairwise disjoint subsets $E_1, E_2,\dots$ of $E$ as follows. Assume
  that for some $k\in\bn$ we have already
  defined $z^{(1)},\dots, z^{(k-1)}$ and $E_1,\dots, E_{k-1}$. Set
  $F_k=E\setminus \cup_{i=1}^{k-1}E_i$ (so, in particular,
  $F_1=E$). If $\abs{F_k}<(1-q)n$, then we
  stop. Otherwise we choose $z^{(k)}\geq 0$ in $B_{X^*}$ satisfying
  \[
  \supp z^{(k)}\subset F_k \qquad\text{and}\qquad
  \ip{\bi_{F_k}}{z^{(k)}}=\norm{\bi_{F_k}}\ ,
  \]
  and define
  \[
  E_k=\{ i\in F_k:\,z^{(1)}_i+\dots+z^{(k)}_i\geq\delta \}\ .
  \]
  This completes the induction step. We will see in a moment that this
  process terminates after a finite number of steps.  Assume that
  $E_1,\dots, E_m$ and $F_1,\dots, F_{m+1}$ have been defined for some
  $m\geq 1$ (note that $\abs{F_1}=\abs{E}>(1-q)n$, so at least one set
  $E_1$ is defined). For each $k=1,\dots, m$ and for each $i\in E_k$
  we have
  \[
  z^{(1)}_i+\dots+z^{(k-1)}_i<\delta\ ,
  \]
  and hence
  \[
  z^{(1)}_i+\dots+z^{(m)}_i= z^{(1)}_i+\dots+z^{(k)}_i<1+\delta\ .
  \]
  We also have
  \[
  z^{(1)}_i+\dots+z^{(m)}_i <\delta \qquad\text{for all }i\in F_{m+1}\ .
  \]
  It follows that
  \begin{align*}
    \bigip{\bi_E}{z^{(1)}+\dots+z^{(m)}} & =\sum _{k=1}^m
    \bigip{\bi_{E_k}}{z^{(1)}+\dots +z^{(m)}}\\[2ex]
    & \phantom{=} +\bigip{\bi_{F_{m+1}}}{z^{(1)}+\dots +z^{(m)}} <
    (1+\delta)n\ .
  \end{align*}
  On the other hand, since $\abs{F_k}\geq (1-q)n$ for each
  $k=1,\dots,m$, and since $\varf(x)/x$ is decreasing, we have
  \[
  \bigip{\bi_E}{z^{(1)}+\dots+z^{(m)}} =\sum _{k=1}^m
  \bigip{\bi_{F_k}}{z^{(k)}} \geq
  m\frac{\varf\big((1-q)n\big)}{\Delta}\geq m(1-q)\frac{\varf(n)}{\Delta}
  \ .
  \]
  Thus, we can deduce that
  \begin{equation}
    \label{eq:flat-norming-fnl:bound-on-m}
    m\leq \frac{(1+\delta)\Delta}{(1-q)}\cdot\frac{n}{\varf(n)}\ ,
  \end{equation}
  which in particular shows that the process does indeed
  terminate. Let $m$ denote the time when this happens, \ie when
  $\abs{F_{m+1}}<(1-q)n$. Let us now set $A=\bigcup_{k=1}^m
  E_k$. It is clear that $\abs{A}\geq qn$. It remains to show that
  $A\in \cA$. Since
  \[
  z^{(1)}_i+\dots +z^{(m)}_i \geq\delta\qquad\text{for all }i\in A\ ,
  \]
  it follows that $\norm{\delta \bi_A}^*\leq \norm{z^{(1)}+\dots
    +z^{(m)}}^*\leq m$. Combining this observation
  with~\eqref{eq:flat-norming-fnl:bound-on-m} above, we obtain
  \[
  \bignorm{{\ts \frac{\varf(\abs{A})}{\abs{A}}} \bi_A}^*
  \leq \frac{m\varf(\abs{A})}{\delta\abs{A}} \leq
  \frac{(1+\delta)\Delta}{(1-q)\delta}\cdot
  \frac{n}{\abs{A}} \cdot \frac{\varf(\abs{A})}{\varf(n)}
  \leq \frac{(1+\delta)\Delta}{(1-q)\delta}\cdot
  \frac{1}{q} \cdot \frac{\varf(\abs{A})}{\varf(n)}
  \leq C\ ,
  \]
  which completes the proof.
\end{proof}

We are now ready to prove Theorem~\ref{mainthm:1+e-democratic} on
improving the democracy constant.
\begin{thm}
  \label{thm:1+e-democratic}
  Let $(e_i)$ be an unconditional and democratic basis of a Banach
  space $X$. For any $\vare>0$ there is an equivalent norm
  $\tnorm{\cdot}$ on $X$ with
  respect to which $(e_i)$ is normalized, $1$-unconditional and
  $(1+\vare)$-democratic.
\end{thm}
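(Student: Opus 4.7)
The plan is to build an equivalent norm $\tnorm{\cdot}$ as a supremum of norming functionals supported on a carefully chosen family $\cA$ of finite subsets of $\bn$, where $\cA$ is supplied by Lemma~\ref{lem:flat-norming-fnl}. This is the natural analogue, in the non-bidemocratic setting, of the averaging construction used in Theorem~\ref{thm:bidemocratic-renorming}.

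First, by the standard renormings mentioned in the introduction, I may assume that $(e_i)$ is already normalized, $1$-unconditional, and $\Delta$-democratic for some $\Delta\geq 1$; let $\varf$ denote its fundamental function. Given $\vare>0$, I set $q=1/(1+\vare)\in(0,1)$ and fix $C>\Delta/(q(1-q))$. By Lemma~\ref{lem:flat-norming-fnl} the associated family $\cA$ satisfies $\bignorm{(\varf(\abs{A})/\abs{A})\bi_A}^*\leq C$ for every $A\in\cA$, and every finite $E\subset\bn$ contains some $A\in\cA$ with $\abs{A}\geq q\abs{E}$. I then define
\[
\tnorm{x}=\max\Big\{\norm{x},\ \sup_{A\in\cA}\frac{\varf(\abs{A})}{\abs{A}}\sum_{i\in A}\abs{e_i^*(x)}\Big\}.
\]
The inequalities $\norm{x}\leq\tnorm{x}\leq C\norm{x}$ follow from the definition of $\cA$ together with $1$-unconditionality, so $\tnorm{\cdot}$ is equivalent to $\norm{\cdot}$. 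The right-hand side depends only on $\abs{e_i^*(x)}$, so $1$-unconditionality is preserved, and using that $\varf(n)/n$ is decreasing with $\varf(1)=1$ one checks $\tnorm{e_i}=1$.

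The heart of the argument is the democracy estimate. For finite $B\subset\bn$ and any $A\in\cA$, using that $\varf(x)/x$ is decreasing and $\varf$ is increasing gives $\frac{\varf(\abs{A})}{\abs{A}}\abs{B\cap A}\leq \varf(\abs{B\cap A})\leq \varf(\abs{B})$, hence $\tnorm{\bi_B}\leq\varf(\abs{B})$. Conversely, Lemma~\ref{lem:flat-norming-fnl} produces some $A\in\cA$ with $A\subset B$ and $\abs{A}\geq q\abs{B}$; the corresponding term in the supremum is $\varf(\abs{A})\geq q\varf(\abs{B})$, again since $\varf(x)/x$ is decreasing. Whenever $\abs{B'}\leq\abs{B}$ these bounds combine to give $\tnorm{\bi_{B'}}\leq\varf(\abs{B'})\leq\varf(\abs{B})\leq (1/q)\tnorm{\bi_B}=(1+\vare)\tnorm{\bi_B}$, which is exactly $(1+\vare)$-democracy.

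The main obstacle is the lower democracy bound, and this is precisely what Lemma~\ref{lem:flat-norming-fnl} is designed to handle. Without bidemocracy the dual fundamental function may be too small to simply average against $(1/\varf^*(n))\bi_A$ as in Theorem~\ref{thm:bidemocratic-renorming}; the flat-norming lemma instead produces, inside any finite set $B$, a subset $A$ of relative density at least $q$ whose rescaled characteristic functional $(\varf(\abs{A})/\abs{A})\bi_A$ has uniformly bounded dual norm. The density loss factor $q$ is exactly what appears in the final democracy constant, and the freedom to choose $q$ arbitrarily close to $1$ delivers the $(1+\vare)$ bound.
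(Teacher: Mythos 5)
Your proof is correct and is essentially the same as the paper's: same reduction to the normalized $1$-unconditional case, same choice of $q$ and $C$, same family $\cA$ from Lemma~\ref{lem:flat-norming-fnl}, the same norm $\tnorm{\cdot}$ (your $\sum_{i\in A}\abs{e_i^*(x)}$ is just $\bigip{\abs{x}}{\bi_A}$ in the paper's notation), and the same two-sided estimate $q\varf(\abs{E})\leq\tnorm{\bi_E}\leq\varf(\abs{E})$ giving the $(1+\vare)$ democracy constant.
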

\begin{proof}
  We can assume that $(e_i)$ is a normalized, $1$-unconditional basis. Let
  $\Delta$ be the democracy constant and $\varf$ be a fundamental
  function for $(e_i)$. Given $\vare>0$, set $q=\frac{1}{1+\vare}$, fix
  $C>\frac{\Delta}{q(1-q)}$, and let $\cA$ be the family given by
  Lemma~\ref{lem:flat-norming-fnl}. Then the following defines a
  $C$-equivalent norm on $X$:
  \[
  \tnorm{x} = \norm{x}\vee \sup\Big\{ \bigip{\abs{x}}{\
    {\ts \frac{\varf(\abs{A})}{\abs{A}}} \bi_{A}}:\,A\in\cA \Big\}\ .
  \]
  Clearly, $(e_i)$ is still normalized and $1$-unconditional in the
  new norm. We need to verify that it is $(1+\vare)$-democratic. Fix
  $E\subset \bn$ and let $n=\abs{E}$. Taking $A\in\cA$ with $A\subset
  E$ and $\abs{A}\geq q\abs{E}$, we obtain
  \[
  \tnorm{\bi_E}\geq
  \Bigip{\bi_E}{{\ts\frac{\varf(\abs{A})}{\abs{A}}}\bi_{A}} =
  \frac{\varf(\abs{A})}{\abs{A}}\cdot\abs{A} \geq
  \frac{\varf(\abs{E})}{\abs{E}}\cdot\abs{A} \geq q\varf(n)\ .
  \]
  It remains to verify that $\tnorm{\bi_E}\leq\varf(n)$. On the one
  hand, by definition, we have $\norm{\bi_E}\leq
  \varf(n)$. On the other hand, for an arbitrary $A\in \cA$
  we have
  \begin{align*}
    \Bigip{\bi_E}{{\ts\frac{\varf(\abs{A})}{\abs{A}}}\bi_{A}} &=
    \frac{\varf(\abs{A})}{\abs{A}}\cdot\abs{A\cap E} \\[2ex]
    &\leq \frac{\varf(\abs{A\cap E})}{\abs{A\cap E}}\cdot\abs{A\cap E}
    \leq \varf(n)\ .
  \end{align*}
\end{proof}
\begin{rem}
  The upper bound on the equivalence constant on $\tnorm{\cdot}$ given
  by the proof of Theorem~\ref{thm:1+e-democratic} above (which in
  turn comes from the proof of Lemma~\ref{lem:flat-norming-fnl}) is of
  order $\frac{1}{\vare}$. In special cases this can be improved. For
  example, it is not hard to see that in Tsirelson's space we get a
  constant of order $\log \frac{1}{\vare}$. However, in general, the
  best constant must converge to infinity as $\vare$ goes to
  zero. Indeed, assume that $(e_i)$ is a greedy basis of $X$ for which
  there exists a constant $C$ such that for all $\vare>0$ there is a
  $C$-equivalent norm $\norm{\cdot}_\vare$ on $X$ with respect to
  which $(e_i)$ is normalized and $(1+\vare)$-democratic. Fix a
  non-trivial ultrafilter $\cU$ and define $\tnorm{x}=\lim _{\cU}
  \norm{x}_{\frac{1}{n}}$ for $x\in X$. Then $\tnorm{\cdot}$ is a
  $C$-equivalent norm on $X$ with respect to which $(e_i)$ is
  $1$-democratic. As mentioned in the Introduction, there are greedy
  bases for which such renorming is not possible.
\end{rem}

\begin{thm}
  \label{thm:non-flat-fundamental-fn}
  Let $(e_i)$ be a greedy basis of a Banach space $X$ with fundamental
  function $\varf$. Assume that $\delta(\varf)>0$. Then for all
  $\vare>0$ there is an equivalent norm on $X$ with respect to which
  $(e_i)$ is normalized, $1$-unconditional and $(1+\vare)$-greedy.
\end{thm}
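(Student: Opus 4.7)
The plan is to imitate the proof of Theorem~\ref{thm:1+e-greedy-renorming-for-bidemocratic}, replacing the role of bidemocracy (which gave us that $\tfrac{1}{\varf^*(n)} \bi_A$ is a norming functional with $\ip{\bi_A}{\tfrac{1}{\varf^*(n)}\bi_A}\approx\varf(n)$) by the family of ``flat'' sets $\cA$ supplied by Lemma~\ref{lem:flat-norming-fnl}. Since $(e_i)$ is greedy we may begin by applying Theorem~\ref{thm:1+e-democratic} to renorm so that $(e_i)$ is normalized, $1$-unconditional and $(1+\delta)$-democratic for any prescribed small $\delta>0$ (to be fixed later depending on $\vare$). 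Because $\delta(\varf)>0$ is invariant under renorming, Lemmas~\ref{lem:non-flat-fund-fn} and~\ref{lem:concave-fund-fn} allow us to work with an equivalent concave fundamental function $\psi\sim\varf$ satisfying $\delta_\psi(1+\delta)>\tfrac{1}{1+\delta}$; one more bounded renorming (of Lorentz-type, using a norm of the form $\sup_A \tfrac{\psi(|A|)}{|A|}\ip{|x|}{\bi_A}\vee\norm{x}$) makes $\psi$ serve as the fundamental function.

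With $q=\tfrac{1}{1+\delta}$, apply Lemma~\ref{lem:flat-norming-fnl} to obtain a constant $C=C(\delta)$ and the family $\cA$ of sets $A$ with $\bignorm{\tfrac{\psi(|A|)}{|A|}\bi_A}^*\leq C$, such that every finite $E$ contains some $A\in\cA$ with $|A|\geq q|E|$. For a small parameter $\vare'>0$ (of order $\vare\cdot q/C$) define
\[
\tnorm{x}=\sup\Big\{\bigip{|x|}{x^*+\tfrac{1}{C}\cdot\tfrac{\psi(|A|)}{|A|}\bi_A}:\,x^*\in\vare'B_{X^*},\ A\in\cA\Big\}.
\]
Each of these functionals has dual norm at most $1+\vare'$, so $\tnorm{\cdot}$ is equivalent to $\norm{\cdot}$; taking $A=\emptyset$ shows $\tnorm{x}\geq\norm{x}$ (up to the $\vare'$-factor), and since the coefficients of the norming functionals are non-negative and we test against $|x|$, the basis $(e_i)$ is automatically $1$-unconditional with respect to $\tnorm{\cdot}$. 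By Theorem~\ref{thm:greedy-char} it then suffices to prove Property~(A) with constant~$1+\vare$.

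For Property~(A), fix $x\geq 0$ with $\norm{x}_{\ell_\infty}\leq 1$ and disjoint $B,\Bt\subset\bn\setminus\supp(x)$ with $|B|=|\Bt|$; we must show $\tnorm{x+\bi_B}\leq(1+\vare)\tnorm{x+\bi_{\Bt}}$. Let the pair $(x^*,A)$ with $A\in\cA$ almost attain $\tnorm{x+\bi_B}$; by $1$-unconditionality we may assume $x^*\geq 0$ and $\supp(x^*)\cup A\subset\supp(x)\cup B$. Write $F=A\cap\supp(x)$, $G=A\cap B$, so that $\ip{x+\bi_B}{x^*+\tfrac{1}{C}\tfrac{\psi(|A|)}{|A|}\bi_A}$ decomposes exactly as in the bidemocratic proof. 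The perturbation term is bounded by $\ip{\bi_B}{x^*}\leq\vare'\norm{\bi_B}=\vare'\psi(|\Bt|)$, and since applying Lemma~\ref{lem:flat-norming-fnl} to $\Bt$ yields some $A_0\in\cA$ with $A_0\subset\Bt$ and $|A_0|\geq q|\Bt|$, we get $\tnorm{x+\bi_{\Bt}}\geq\tfrac{1}{C}\psi(|A_0|)\geq\tfrac{q}{C}\psi(|\Bt|)$, absorbing the perturbation into a term of order $(C\vare'/q)\tnorm{x+\bi_{\Bt}}<\vare\tnorm{x+\bi_{\Bt}}$.

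The main obstacle, and the place where the hypothesis $\delta(\varf)>0$ is essential, is the ``set-movement'' step: we need an $\At\in\cA$ with $\At\cap\supp(x)=F$ and $|\At\cap\Bt|=|G|$ (and, ideally, $|\At|=|A|$), so that the remaining contribution to $\tnorm{x+\bi_B}$ is matched by $\tfrac{1}{C}\tfrac{\psi(|\At|)}{|\At|}\ip{x+\bi_{\Bt}}{\bi_{\At}}\leq\tnorm{x+\bi_{\Bt}}$. In the bidemocratic case any $\At$ of the correct size worked; here the constraint $\At\in\cA$ is non-trivial. The plan is to apply Lemma~\ref{lem:flat-norming-fnl} to an appropriately chosen set of the form $F\cup\Bt_0$, with $\Bt_0\subset\Bt$ of size slightly larger than $|G|$, to extract $\At\in\cA$ of size within a factor $1+\delta$ of $|A|$. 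The resulting size mismatch $|\At|/|A|\in[q,1/q]$ contributes a factor $\tfrac{\psi(|A|)/|A|}{\psi(|\At|)/|\At|}=\lambda_\psi(|A|)/\lambda_\psi(|\At|)$, which by the estimate $\lambda_\psi(n_1)/\lambda_\psi(n_2)\leq 1/\delta_\psi(n_1/n_2)$ and the choice $\delta_\psi(1+\delta)>\tfrac{1}{1+\delta}$ is at most $1+O(\delta)$. Choosing $\delta, q, \vare'$ appropriately small in terms of $\vare$ yields Property~(A) with constant $1+\vare$, and Theorem~\ref{thm:greedy-char} then gives the $(1+\vare)$-greedy property.
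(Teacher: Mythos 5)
Your high-level strategy — using Lemma~\ref{lem:flat-norming-fnl} to supply a family $\cA$ of ``flat'' norming sets, and Lemmas~\ref{lem:non-flat-fund-fn} and~\ref{lem:concave-fund-fn} to replace $\varf$ by a well-behaved concave equivalent $\psi$ — is the right starting point and matches the paper's. But there is a genuine gap at the step you flag as the ``main obstacle'', and it is precisely the obstacle that forces a different construction.

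You want to find a \emph{single} set $\At\in\cA$ with $\At\cap\supp(x)=F$ and $|\At\cap\Bt|=|G|$, and you propose to get it by applying Lemma~\ref{lem:flat-norming-fnl} to $E=F\cup\Bt_0$. That lemma only produces \emph{some} $\At\subset E$ with $\At\in\cA$ and $|\At|\geq q|E|$; it gives no control whatsoever over how $\At$ splits between $F$ and $\Bt_0$. In particular $\At$ could lie entirely inside $\Bt_0$ (so $\ip{x}{\bi_{\At}}=0$ and the contribution of $\ip{x}{\bi_A}$ is lost), or entirely inside $F$ (so $\At\cap\Bt=\emptyset$), or split arbitrarily, and — even when $\At\subset F$ — $\At$ could miss exactly the coordinates of $F$ where $x$ is large, making $\ip{x}{\bi_{\At}}$ much smaller than $\ip{x}{\bi_A}$. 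The family $\cA$ is not closed under unions or under prescribing the $\supp(x)$-trace, so there is no way to ``splice'' a good set on $\supp(x)$ with a good set on $\Bt$. A secondary issue is the preliminary ``bounded Lorentz-type renorming'' intended to make $\psi$ the fundamental function: the functional $x\mapsto\sup_A\frac{\psi(|A|)}{|A|}\ip{|x|}{\bi_A}$ is bounded exactly when the basis is bidemocratic, which you are not assuming.

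The paper's construction circumvents this by never asking for a single set in $\cA$ that simultaneously captures both $\supp(x)$ and $\Bt$. Instead it uses the family $\cF_m$ of \emph{sums} $f=\sum_{i=1}^m\frac{\psi(|A_i|)}{|A_i|}\bi_{B_i}$ with $B_i\subset A_i\in\cA$ pairwise disjoint, together with an extra term $L\bi_A$ for $|A|\leq n_0$ to handle small $B$. For the Property~(A) estimate one then \emph{keeps} the first $m-1$ summands of $f$ (only restricting $B_i$ to $\supp(x)$), so their $\ip{x}{\cdot}$-contribution is preserved exactly and amounts to at least $\frac{m-1}{m}\ip{x}{f}$, and one \emph{replaces} only the least-contributing summand by a fresh set $\At_m\in\cA$ inside $\Bt$ supplied by Lemma~\ref{lem:flat-norming-fnl}. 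The hypothesis $\delta(\varf)>0$ enters through concavity of $\psi$: $\ip{\bi_B}{f}\leq\sum_i\psi(|B_i\cap B|)\leq m\psi(|B|/m)\leq(1+\vare)\psi(|B|)$ once $|B|\geq n_0$, where $n_0$ is chosen via $\delta_\psi(m)>q$. This is the mechanism that should replace your single-set ``set-movement'' argument.
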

\begin{proof}
  We can assume that $(e_i)$ is normalized and $1$-unconditional. Let
  $\Delta$ be the democracy constant of $(e_i)$. Given $\vare>0$, set
  $q=\frac{1}{1+\vare}$, fix $C>\frac{\Delta}{q(1-q)}$, and let $\cA$
  be the family given by Lemma~\ref{lem:flat-norming-fnl}. Next, fix
  $m\geq 2$ in $\bn$ such that $\frac{m}{m-1}\leq 1+\vare$. With the
  given $\vare$ and $m$ we apply Lemma~\ref{lem:non-flat-fund-fn}
  and then Lemma~\ref{lem:concave-fund-fn} to obtain a concave
  fundamental function $\psi$ and positive constants $a$ and $b$ such
  that $\delta_\psi(m)>q$ and $a\varf(x)\leq\psi(x)\leq b\varf(x)$ for
  all $x\in[1,\infty)$. By the definition of $\delta_\psi$, we can
  choose an integer $n_0>\frac{1}{\vare}$ such that
  $\psi(x)>qm\psi\big(\frac{x}{m}\big)$ for all $x\geq n_0$. Set
  $s=\frac{\vare a}{1+\vare}$, $L=\frac{m\psi(1)}{\vare}$ and
  \[
  \cF_m=\Big\{ \sum_{i=1}^m {\ts \frac{\psi(\abs{A_i})}{\abs{A_i}} }
  \bi_{B_i}:\,B_i\subset A_i\in\cA\ ,\ A_1,\dots, A_m
  \text{ pairwise disjoint} \Big\}\ .
  \]
  We are now ready to define a new norm $\tnorm{\cdot}$ as follows.
  \[
  \tnorm{x} = \sup \big\{
  \ip{\abs{x}}{x^*+f+L\bi_A}:\,x^*\in s B_{X^*},\ f\in\cF_m,\
  \abs{A}\leq n_0\big\}\ .
  \]
  It is easy to verify that $s\norm{x}\leq \tnorm{x} \leq
  (s+mbC+Ln_0) \norm{x}$ for all $x\in X$, and it is clear that
  $(e_i)$ is a $1$-unconditional basis in $\tnorm{\cdot}$. We next
  prove that it also satisfies Property~(A) with constant
  $1+4\vare$. Fix $x\in\coo$ with $x\geq 0$ and
  $B,\Bt\subset\bn\setminus \supp(x)$ such that
  $\norm{x}_{\ell_\infty}\leq 1$ and $\abs{B}=\abs{\Bt}<\infty$. It is
  sufficient to prove that $\tnorm{x+\bi_B}\leq (1+4\vare)
  \tnorm{x+\bi_{\Bt}}$.

  For some $x^*\in sB_{X^*}$, $f=\sum_{i=1}^m
  \frac{\psi(\abs{A_i})}{\abs{A_i}} \bi_{B_i}\in\cF_m$ and
  $A\subset\bn$ with $\abs{A}\leq n_0$ we have
  \begin{equation}
    \label{eq:norming-x+B-2}
  \begin{aligned}
    \tnorm{x+\bi_B}&=
    \ip{x+\bi_B}{x^*+f+L\bi_A}\\[2ex]
    &= \ip{x}{x^*}+\ip{\bi_B}{x^*} + \ip{x}{f} + \ip{\bi_B}{f} +
    L\ip{x}{\bi_A} + L\abs{B\cap A}\ .
  \end{aligned}
  \end{equation}  
  Without loss of generality we may assume that $x^*\geq 0$.
  We now estimate some of the terms above. First, we have
  \[
  \ip{\bi_B}{x^*}\leq s\norm{\bi_B}\leq s\varf(\abs{B}) \leq
  \frac{s}{a} \psi(\abs{\Bt})\ .
  \]
  On the other hand, we can choose $C\subset\Bt$ such that $C\in\cA$
  and $\abs{C}\geq q\abs{\Bt}$. Then $g=\frac{\psi(\abs{C})}{\abs{C}}
  \bi_C\in\cF_m$, and so
  \[
  \tnorm{x+\bi_{\Bt}} \geq \ip{x+\bi_{\Bt}}{g} = {\ts
  \frac{\psi(\abs{C})}{\abs{C}} } \abs{C} \geq {\ts
  \frac{\psi(\abs{\Bt})}{\abs{\Bt}} } \abs{C} \geq q\psi(\abs{\Bt})\ .
  \]
  Hence, by the choice of $s$, we have
  \begin{equation}
    \label{eq:x*-on-B-2}
    \ip{\bi_B}{x^*}\leq \frac{s}{a} \psi(\abs{\Bt})\leq
    \frac{s(1+\vare)}{a} \tnorm{x+\bi_{\Bt}} = 
    \vare\tnorm{x+\bi_{\Bt}}\ .
  \end{equation}  
  Next, without loss of generality, we may assume that
  \[
  {\ts \frac{\psi(\abs{A_m})}{\abs{A_m}} } \ip{x}{\bi_{B_m}} = \min
  _{1\leq i\leq m} {\ts \frac{\psi(\abs{A_i})}{\abs{A_i}} }
  \ip{x}{\bi_{B_i}}\ ,
  \]
  and hence we obtain
  \begin{equation}
    \label{eq:f-on-x-2}
    \ip{x}{f} = \sum_{i=1}^m {\ts \frac{\psi(\abs{A_i})}{\abs{A_i}} }
    \ip{x}{\bi_{B_i}} \leq \frac{m}{m-1}
    \sum_{i=1}^{m-1} {\ts \frac{\psi(\abs{A_i})}{\abs{A_i}} }
    \ip{x}{\bi_{B_i}}\ .
  \end{equation}
  Using the fact that $\frac{\psi(x)}{x}$ is decreasing, we then
  obtain
  \begin{equation}
    \label{eq:f-on-B}
    \ip{\bi_B}{f}=\sum_{i=1}^m \frac{\psi(\abs{A_i})}{\abs{A_i}}
    \abs{B_i\cap B} \leq \sum_{i=1}^m \psi(\abs{B_i\cap B})\ .
  \end{equation}
  We now consider two cases. In the first case we assume that
  $\abs{B}=\abs{\Bt}\leq n_0$. Then by the choice of $L$ we have
  \begin{equation}
    \label{eq:f-on-B-1}
    \sum_{i=1}^m \psi(\abs{B_i\cap B}) \leq m\psi(\abs{B}) \leq \vare
    L \abs{\Bt} = \vare \ip{x+\bi_{\Bt}}{L\bi_{\Bt}} \leq
    \vare\tnorm{x+\bi_{\Bt}}\ .
  \end{equation}
  Choose $\At\subset\bn$ such that $\At\cap\supp(x)=A\cap\supp(x)$,
  $\abs{\At\cap\Bt}=\abs{A\cap B}$ and
  $\abs{\At}=\abs{A}$. We then deduce that
  \begin{align*}
    \tnorm{&x+\bi_{\Bt}} &\\[2ex]
    &= \ip{x}{x^*}+\ip{\bi_B}{x^*} + \ip{x}{f} +
    \ip{\bi_B}{f} + L\ip{x}{\bi_A} + L\abs{B\cap A} \quad
    \text{by~\eqref{eq:norming-x+B-2}}\\[2ex]
    &=\ip{x}{x^*}+\ip{\bi_B}{x^*} + \ip{x}{f} +
    \ip{\bi_B}{f} + L\ip{x}{\bi_{\At}} + L\abs{\Bt\cap \At} \quad
    \text{by choice of $\At$}\\[2ex]
    &\leq \ip{x}{x^*}+\ip{x}{f} + L\ip{x}{\bi_{\At}} + L\abs{\Bt\cap
      \At} + 2\vare\tnorm{x+\bi_{\Bt}} \quad
    \text{by~\eqref{eq:x*-on-B-2},~\eqref{eq:f-on-B},~\eqref{eq:f-on-B-1}}
    \\[2ex]
    &\leq \ip{x+\bi_{\Bt}}{x^*+f+L\bi_{\At}} +
    2\vare\tnorm{x+\bi_{\Bt}} \quad \text{as $x^*\geq 0$}\\[2ex]
    &\leq (1+2\vare)\tnorm{x+\bi_{\Bt}} \ .
  \end{align*}
  We now turn to the second case when $\abs{B}=\abs{\Bt}>n_0$. Then by
  concavity of $\psi$ and by the choice of $n_0$ we obtain the
  estimate
  \begin{equation}
    \label{eq:f-on-B-2}
    \sum_{i=1}^m \psi(\abs{B_i\cap B}) \leq m\psi
    \big({\ts \frac{\abs{B}}{m} }\big) \leq (1+\vare)\psi(\abs{B})\ .
  \end{equation}
  Now choose $\At$ as in the previous case, set $\At_i=A_i$ and
  $\Bt_i=B_i\cap\supp(x)$ for $1\leq i<m$, and choose
  $\Bt_m=\At_m\in\cA$ such that $\At_m\subset \Bt$ and
  $\abs{\At_m}\geq q \abs{\Bt}$. Then $g=\sum_{i=1}^m
  \frac{\psi(\abs{\At_i})}{\abs{\At_i}} \bi_{\Bt_i}\in\cF_m$, and
  by~\eqref{eq:f-on-x-2} and the choice of $m$, we have
  \begin{equation}
    \label{eq:g-on-x+Bt}
    \begin{aligned}
      \ip{x+\bi_{\Bt}}{g} & \\[2ex]
      &= \sum_{i=1}^{m-1} {\ts \frac{\psi(\abs{A_i})}{\abs{A_i}} }
      \ip{x}{\bi_{B_i}} + {\ts \frac{\psi(\abs{\At_m})}{\abs{\At_m}} }
      \abs{\At_m}\\[2ex]
      &\geq \frac{m-1}{m} \ip{x}{f} + {\ts
        \frac{\psi(\abs{\Bt})}{\abs{\Bt}} } \abs{\At_m}
      \geq q\ip{x}{f} + q\psi(\abs{\Bt})\ .
    \end{aligned}
  \end{equation}
  It follows that
  \begin{align*}
    \tnorm{&x+\bi_{\Bt}} \\[2ex]
    &= \ip{x}{x^*}+\ip{\bi_B}{x^*} + \ip{x}{f} +
    \ip{\bi_B}{f} + L\ip{x}{\bi_A} + L\abs{B\cap A} \quad
    \text{by~\eqref{eq:norming-x+B-2}}\\[2ex]
    &=\ip{x}{x^*}+\ip{\bi_B}{x^*} + \ip{x}{f} +
    \ip{\bi_B}{f} + L\ip{x}{\bi_{\At}} + L\abs{\Bt\cap \At} \quad
    \text{by choice of $\At$}\\[2ex]
    &\leq \ip{x}{x^*}+\ip{x}{f} + \ip{\bi_B}{f} + L\ip{x}{\bi_{\At}} +
    L\abs{\Bt\cap \At} + \vare\tnorm{x+\bi_{\Bt}} \quad
    \text{by~\eqref{eq:x*-on-B-2}} \\[2ex]
    &\leq \ip{x}{x^*}+ (1+\vare)^2\ip{x+\bi_{\Bt}}{g}\\[2ex]
    & \phantom{\leq\ } +
    L\ip{x}{\bi_{\At}} + L\abs{\Bt\cap \At} + 
    \vare\tnorm{x+\bi_{\Bt}} \quad 
    \text{by~\eqref{eq:f-on-B},~\eqref{eq:f-on-B-2},~\eqref{eq:g-on-x+Bt}}
    \\[2ex]
    &\leq (1+\vare)^2\ip{x+\bi_{\Bt}}{x^*+g+L\bi_{\At}} +\vare
    \tnorm{x+\bi_{\Bt}}\\[2ex]
    &\leq (1+4\vare) \tnorm{x+\bi_{\Bt}}\ ,
  \end{align*}
  as required. Finally, it is easy to see that
  $\tnorm{e_i}=s+\psi(1)+L$ for all $i\in\bn$. So by scaling the new
  norm, we make $(e_i)$ normalized, $1$-unconditional and
  $(1+4\vare)$-greedy.
\end{proof}
The condition $\delta(\varf)>0$ says that the growth of $\varf$ on
intervals of any given fixed size is eventually linear. For example, when
$\varf(x)\sim x$ or $\varf(x)\sim \frac{x}{\log x}$, then
$\delta(\varf)>0$, so Theorem~\ref{thm:non-flat-fundamental-fn}
applies. Note also that when $\varf$ has the URP, then
$\delta(\varf)=0$. However, in that case the basis is bidemocratic and
Theorem~\ref{thm:1+e-greedy-renorming-for-bidemocratic} can be
used. We next give an application of
Theorem~\ref{thm:non-flat-fundamental-fn} in two special cases. Note
that neither of these bases is bidemocratic, so
Theorem~\ref{thm:1+e-greedy-renorming-for-bidemocratic} cannot be
applied.
\begin{cor}
  For all $\vare>0$ there is an equivalent norm on dyadic Hardy space
  $H_1$ and on Tsirelson's space $T$ such that the Haar system,
  respectively, the unit vector basis is normalized,
  $1$-unconditional and $(1+\vare)$-greedy.
\end{cor}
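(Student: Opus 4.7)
The plan is to apply Theorem~\ref{thm:non-flat-fundamental-fn} in each of the two cases. To do so, one must verify for each basis only two things: that it is a greedy basis, and that its fundamental function $\varf$ satisfies $\delta(\varf)>0$. Once both are in hand, the theorem delivers the desired equivalent norm directly, and there is nothing further to do.

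For Tsirelson's space $T$, the unit vector basis $(t_i)$ is $1$-unconditional by the very definition of the Tsirelson norm, and it is well known to be democratic with fundamental function $\varf(n)\sim n/\log_2(n+1)$. In particular $(t_i)$ is greedy. By the remark following Theorem~\ref{thm:non-flat-fundamental-fn}, a fundamental function of the form $x/\log x$ has $\delta(\varf)>0$; this follows from a direct computation, since for any fixed $y\geq 1$,
\[
\frac{\varf(yx)}{y\varf(x)} \sim \frac{\log_2(x+1)}{\log_2(yx+1)}\xrightarrow[x\to\infty]{} 1,
\]
so that $\delta_\varf(y)=1$ and hence $\delta(\varf)=1>0$.

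For dyadic Hardy space $H_1$, Wojtaszczyk~\cite{wojtaszczyk:00} showed that the (appropriately normalized) Haar system is a greedy basis; it is also $1$-unconditional after a routine renorming. Its fundamental function is again known to grow linearly up to a logarithmic factor (of the form $\varf(n)\sim n/\log_2(n+1)^{1/2}$), and the same direct computation as above yields $\delta(\varf)>0$. Applying Theorem~\ref{thm:non-flat-fundamental-fn} to each basis then produces, for any $\vare>0$, the promised equivalent norm making the basis normalized, $1$-unconditional and $(1+\vare)$-greedy. The only mildly delicate point in this plan is correctly locating the asymptotics of the fundamental function in each case; once those are cited from the literature, the positivity of $\delta(\varf)$ and the application of the theorem are entirely routine.
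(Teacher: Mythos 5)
Your strategy is exactly the intended one --- verify for each basis that it is greedy and that its fundamental function $\varf$ satisfies $\delta(\varf)>0$, and then invoke Theorem~\ref{thm:non-flat-fundamental-fn} --- and since the paper states the corollary without further argument this is also the paper's (implicit) proof. Your computation showing $\delta_\varf(y)=1$ for $\varf(x)$ of the form $x/(\log x)^\alpha$ is fine. The one issue is that the specific asymptotics you quote for the two fundamental functions are not right; in both cases one in fact has the stronger statement $\varf(n)\sim n$. For Tsirelson's space $T$, the admissible family of singletons $E_j=\{m+j-1\}$, $j=1,\dots,m$, with $m=\lfloor(n+1)/2\rfloor$, already gives $\bignorm{\sum_{i=1}^n e_i}_T\geq m/2\geq(n-1)/4$, so $\varf_T(n)$ is of order $n$ rather than $n/\log_2(n+1)$. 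For dyadic $H_1$ with the $H_1$-normalized Haar system, any $n$ pairwise disjoint dyadic intervals $I_1,\dots,I_n$ yield $\bignorm{\sum_j c_j e_{I_j}}_{H_1}=\sum_j\abs{c_j}$, since the square function equals $\abs{c_j}/\abs{I_j}$ on $I_j$; thus these $e_{I_j}$ span $\ell_1^n$ isometrically and $\varf_{H_1}(n)=n$, not $n/\sqrt{\log_2(n+1)}$. This slip does not undermine the argument, because with $\varf(n)\sim n$ the function $\lambda(n)=\varf(n)/n$ is bounded above and below by positive constants, so $\delta(\varf)>0$ is immediate; but the asymptotics should be corrected, and the democracy and greediness of the Tsirelson unit vector basis deserves a citation rather than a ``well known''.
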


\section{Open problems}

For bidemocratic bases we were able to achieve the best possible
renorming for the democracy constant
(Theorem~\ref{thm:bidemocratic-renorming}). For the greedy constant
Theorem~\ref{thm:1+e-greedy-renorming-for-bidemocratic} gets
arbitrarily close, but the following remains open.
\begin{problem}
  \label{prob:bidem-1-greedy}
  Let $(e_i)$ be a bidemocratic basis of a Banach space $X$. Does
  there exist an equivalent norm on $X$ with respect to which $(e_i)$
  is $1$-greedy?
\end{problem}
The following special case of interest was raised by Albiac and
Wojtaszczyk.
\begin{problem}[\cite{alb-woj:06}*{Problem 6.2}]
  \label{prob:Haar-1-greedy}
  Let $1<p<\infty$. Does there exist an equivalent norm on $L_p[0,1]$
  with respect to which the Haar basis is $1$-greedy?
\end{problem}
The other main problem that remains open concerns the greedy constant
in the general, not necessarily bidemocratic, case.
\begin{problem}
  Let $(e_i)$ be a greedy basis of a Banach space $X$. Does there
  exist for any $\vare>0$ an equivalent norm on $X$ with respect to
  which the basis is $(1+\vare)$-greedy?
\end{problem}
This paper gives a positive answer for a large family of bases. In
terms of the behaviour of the fundamental function $\varf$, if $\varf$
has the URP, or if, on the other extreme, $\delta(\varf)>0$, then the
answer is `yes'. If the basis is bidemocratic, or, more generally, if
there is a constant $C$ such that the family $\cA$ defined in
Lemma~\ref{lem:flat-norming-fnl} consists of \emph{all }finite subsets
of $\bn$, then the proof of
Theorem~\ref{thm:1+e-greedy-renorming-for-bidemocratic} furnishes a
positive answer. However, as pointed out at the end of
Section~\ref{sec:fundamental-functions}, there are fundamental
functions $\varf$ with $\delta(\varf)=0$ and with
\[
\lim_{m\to\infty} \limsup_{n\to\infty} \frac{\varf(mn)}{m\varf(n)}=1\ .
\]
Note that this latter condition rules out properties like the URP. So
there is still a gap.

\begin{bibdiv}
\begin{biblist}
 


\bib{alb-woj:06}{article}{
  author={Albiac, F.},
  author={Wojtaszczyk, P.},
  title={Characterization of 1-greedy bases},
  journal={J. Approx. Theory},
  volume={138},
  date={2006},
  number={1},
  pages={65--86},
  issn={0021-9045},
  review={\MR {2197603 (2006i:41026)}},
  doi={10.1016/j.jat.2005.09.017},
}

\bib{bennett-sharpley:88}{book}{
  author={Bennett, Colin},
  author={Sharpley, Robert},
  title={Interpolation of operators},
  series={Pure and Applied Mathematics},
  volume={129},
  publisher={Academic Press Inc.},
  place={Boston, MA},
  date={1988},
  pages={xiv+469},
  isbn={0-12-088730-4},
  review={\MR {928802 (89e:46001)}},
}

\bib{dfos:11}{article}{
  author={Dilworth, S. J.},
  author={Freeman, D.},
  author={Odell, E.},
  author={Schlumprecht, T.},
  title={Greedy bases for Besov spaces},
  journal={Constr. Approx.},
  volume={34},
  date={2011},
  number={2},
  pages={281--296},
  issn={0176-4276},
  review={\MR {2822772 (2012h:46021)}},
  doi={10.1007/s00365-010-9115-6},
}

\bib{dkkt:03}{article}{
  author={Dilworth, S. J.},
  author={Kalton, N. J.},
  author={Kutzarova, Denka},
  author={Temlyakov, V. N.},
  title={The thresholding greedy algorithm, greedy bases, and duality},
  journal={Constr. Approx.},
  volume={19},
  date={2003},
  number={4},
  pages={575--597},
  issn={0176-4276},
  review={\MR {1998906 (2004e:41045)}},
  doi={10.1007/s00365-002-0525-y},
}

\bib{dosz:11}{article}{
  author={Dilworth, S. J.},
  author={Odell, E.},
  author={Schlumprecht, Th.},
  author={Zs{\'a}k, A.},
  title={Renormings and symmetry properties of 1-greedy bases},
  journal={J. Approx. Theory},
  volume={163},
  date={2011},
  number={9},
  pages={1049--1075},
  issn={0021-9045},
  review={\MR {2832742}},
  doi={10.1016/j.jat.2011.02.013},
}

\bib{kony-tem:99}{article}{
  author={Konyagin, S. V.},
  author={Temlyakov, V. N.},
  title={A remark on greedy approximation in Banach spaces},
  journal={East J. Approx.},
  volume={5},
  date={1999},
  number={3},
  pages={365--379},
  issn={1310-6236},
  review={\MR {1716087 (2000j:46020)}},
}

\bib{temlyakov:98}{article}{
  author={Temlyakov, V. N.},
  title={The best $m$-term approximation and greedy algorithms},
  journal={Adv. Comput. Math.},
  volume={8},
  date={1998},
  number={3},
  pages={249--265},
  issn={1019-7168},
  review={\MR {1628182 (99f:41037)}},
  doi={10.1023/A:1018900431309},
}

\bib{wojtaszczyk:00}{article}{
  author={Wojtaszczyk, P.},
  title={Greedy algorithm for general biorthogonal systems},
  journal={J. Approx. Theory},
  volume={107},
  date={2000},
  number={2},
  pages={293--314},
  issn={0021-9045},
  review={\MR {1806955 (2001k:46017)}},
  doi={10.1006/jath.2000.3512},
}

\bib{wojtaszczyk:03}{article}{
  author={Wojtaszczyk, P.},
  title={Greedy type bases in Banach spaces},
  conference={ title={Constructive theory of functions}, },
  book={ publisher={DARBA, Sofia}, },
  date={2003},
  pages={136--155},
  review={\MR {2092334 (2006d:46007)}},
}

\bib{schechtman:13}{misc}{
  author={Schechtman, G.},
  title={No greedy bases for matrix spaces with mixed $\ell_p$ and
    $\ell_q$ norms},
  status={arXiv:1310.2371},
}

\end{biblist}
\end{bibdiv}

\end{document}